 \newtheorem{remark}{Remark}
 \newtheorem{theorem}[remark]{Theorem}
 \newtheorem{proposition}[remark]{Proposition}
  \newtheorem{observation}[remark]{Observation}
 \newtheorem{corollary}[remark]{Corollary}
\newcommand{\Sd}{\operatorname{Sd}}
\newcommand{\Svarpi}{\operatorname{S\varpi}}
\title{ The Simultaneous Strong Metric Dimension of Graph Families }
\author{A. Estrada-Moreno, C. Garc\'{i}a-G\'{o}mez,
\\Y. Ram\'{i}rez-Cruz, J. A. Rodr\'{i}guez-Vel\'{a}zquez\\{\small Departament d'Enginyeria Inform\`atica i Matem\`atiques,}\\
{\small Universitat Rovira i Virgili,}  {\small Av. Pa\"{\i}sos
Catalans 26, 43007 Tarragona, Spain.} \\{\small
 alejandro.estrada; carlos.garcia; yunior.ramirez; juanalberto.rodriguez\@@urv.cat}}
\begin{document}
\maketitle
\begin{abstract}
Let ${\cal G}$  be a family  of graphs defined on a common (labeled) vertex set $V$. A set $S\subset V$ is said to be a simultaneous strong metric generator for  ${\cal G}$ if it is a strong metric generator for every graph of the family. The minimum cardinality among all simultaneous strong metric generators  for  ${\cal G}$, denoted by  $\Sd_s({\cal G})$, is called   the simultaneous strong metric dimension of ${\cal G}$.
We obtain general results on $\Sd_s({\cal G})$ for  arbitrary families of graphs, with special emphasis on the case of families composed by  a graph and its complement. In particular,  it is shown that the problem of finding the simultaneous strong metric dimension of families of graphs is $NP$-hard,  even when restricted to families of trees. 
\end{abstract}

\section{Introduction}
A {\em generator} of a metric space is a set $S$ of points in the space with the property that every point of the space is uniquely determined by its distances from the elements of $S$. Given a simple and connected graph $G$ with vertex set $V(G)$ and edge set $E(G)$, we consider the metric $d_G:V(G)\times V(G)\rightarrow \mathbb{N}\cup \{0\}$, where $\mathbb{N}$ denotes the set of positive integers and $d_G(x,y)$ denotes the length of a shortest path between $u$ and $v$. The pair $(V(G),d_G)$ is readily seen to be a metric space. A vertex $v\in V(G)$ is said to distinguish two vertices $x$ and $y$ if $d_G(v,x)\ne d_G(v,y)$. 
A set $S\subset V(G)$ is said to be a \emph{metric generator} for $G$ if any pair of vertices of $G$ is
distinguished by some element of $S$. If $S=\{w_1,w_2, \ldots, w_k\}$ is an (ordered) set of vertices, then the {\em metric vector} of a vertex $v \in V(G)$ relative to $S$ is the vector  $(d(v,w_1),d(v,w_2), \ldots, d(v,w_k))$. Thus, $S$ is a metric generator if distinct vertices have distinct  metric vectors relative to $S$.  A minimum cardinality metric generator is called a \emph{metric basis} and
its cardinality, the \emph{metric dimension} of $G$, is denoted by $\dim(G)$. Motivated by the problem of uniquely determining the location of an intruder in a network, by means of a set of devices each of which can detect its distance to the intruder, the concepts of a metric generator and metric basis of a graph were introduced by Slater in \cite{Slater1975} where metric generators were called \emph{locating sets}.  Harary and Melter independently introduced the same concept in \cite{Harary1976}, where metric generators were called \emph{resolving sets}. Applications of the metric dimension to the navigation of robots in networks are discussed in \cite{Khuller1996} and applications to chemistry in \cite{Chartrand2000,Johnson1993,Johnson1998}.  

Seb\"{o} and Tannier  in \cite{Sebo2004} asked the following question for a given  metric generator $T$ of a graph $H$: whenever $H$ is a subgraph of a graph $G$ and the metric vectors of the vertices of $H$ relative to $T$ agree in both $H$ and $G$, is $H$ an isometric subgraph of $G$?  Even though the metric vectors relative to a metric generator of a graph distinguish
all pairs of vertices in the graph, they  do not uniquely determine all distances in a graph as
 was first shown in \cite{Sebo2004}.   It was observed in \cite{Sebo2004} that, if ``metric generator'' is replaced by a stronger notion, namely that of  ``strong  metric generator"   (defined below), then the above question can be answered in the affirmative.

 For $u,v\in V(G)$, the interval $I_G [u, v]$ between $u$ and $v$ is defined as the collection of all vertices that belong to some shortest $u-v$ path. A vertex $w$ {\em strongly resolves} two vertices $u$ and $v$ if $v\in  I_G [u,w]$ or  $u\in I_G [v,w]$ \textit{i.e.}, $d_G(u,w)=d_G(u,v)+d_G(v,w)$ or $d_G(v,w)=d_G(v,u)+d_G(u,w)$.  A set $S$ of vertices in a connected graph $G$ is a \emph{strong metric generator} for $G$ if every two vertices of $G$ are strongly resolved by some vertex of $S$. The smallest cardinality of a strong metric generator for $G$ is called its \emph{strong metric dimension} and is denoted by $\dim_s(G)$.  We say that a strong metric generator for $G$ of cardinality $\dim_s(G)$ is a \emph{strong metric basis} of $G$.

The problem of finding the strong metric dimension of a graph has been studied for several classes of graphs. For instance, this problem was studied for Cayley graphs \cite{Oellermann2007}, distance-hereditary graphs \cite{May2011}, Hamming graphs \cite{Kratica2012}, Cartesian product graphs and direct product graphs \cite{RodriguezVelazquez2014a}, corona product graphs and join graphs \cite{Kuziak2013}, strong product graphs \cite{Kuziak2013c,Kuziak-Erratum}
and convex polytopes \cite{Kratica2012a} . Also, some Nordhaus-Gaddum type results for the strong metric dimension of a graph and its complement are known \cite{Yi2013}. Besides the theoretical results related to the strong metric dimension, a mathematical programming model \cite{Kratica2012a} and metaheuristic approaches \cite{Kratica2008,Mladenovic2012a} for finding this parameter have been developed. For more information the reader is
invited to read the survey 
\cite{Kratica2014} and the references cited therein.

Let ${\mathcal G}=\{G_1,G_2,...,G_k\}$ be a family  of (not necessarily edge-disjoint) connected graphs $G_i=(V,E_i)$ with common vertex set $V$ (the union of whose edge sets is not necessarily the complete graph). Ram\'{i}rez-Cruz, Oellermann  and  Rodr\'{i}guez-Vel\'{a}zquez defined in \cite{Ramirez-Cruz-Rodriguez-Velazquez_2014,Ramirez2014}   a \textit{simultaneous metric generator} for ${\mathcal{G}}$ as a set $S\subset V$ such that $S$ is simultaneously a metric generator for each $G_i$. They introduce the concept of \emph{simultaneous metric basis} of ${\mathcal{G}}$ as a minimum cardinality simultaneous metric generator for ${\mathcal{G}}$, and
its cardinality the \emph{simultaneous metric dimension} of ${\mathcal{G}}$, denoted by $\Sd({\mathcal{G}})$ or explicitly by $\Sd(G_1,G_2,...,G_k )$. Analogously, 
 we define a \emph{simultaneous strong metric generator} for ${\cal G}$ to be a set $S\subset V$ such that $S$ is simultaneously a strong metric generator for each $G_i$. We say that a minimum cardinality  simultaneous strong metric  generator for ${\cal G}$ is a \emph{simultaneous strong metric  basis} of ${\cal G}$, and
its cardinality the \emph{simultaneous strong metric dimension} of ${\cal G}$, denoted by $\Sd_s({\cal G})$ or explicitly by $\Sd_s(G_1,G_2,...,G_t)$.

In this paper we study the problem of finding exact
values or sharp bounds for the simultaneous strong metric dimension. The remainder of the article is organized as follows. In Section \ref{Examples} 
we show that there are some families of graphs for which the simultaneous strong metric dimension can be obtained
relatively easily. To this end we describe the approach developed in \cite{Oellermann2007} of transforming the problem of finding the strong metric dimension of a graph to a vertex cover problem.
In Section \ref{SectionBounds} we obtain  sharp  bounds on the simultaneous strong metric dimension, some of which are generalizations of well known bounds on the strong metric dimension.
In Section \ref{SectionComplement}
we focus  on the particular case of families composed by  a graph and its complement, showing that the problem of finding a simultaneous strong metric generator for $\{G,G^c\}$ can be transformed to the problem of finding a vertex cover of $G$ which, at the same time, is a strong metric generator. Finally, in Section  \ref{SectionComplexity} we show that the problem of finding the simultaneous strong metric dimension of families of trees is $NP$-hard. 

Throughout the paper, we will use the notation $K_n$, $K_{r,s}$, $C_n$ and $P_n$ for complete graphs, complete bipartite graphs, cycle graphs and path graphs of order $n$, respectively. 
For a vertex $v$ of a graph $G$, $N_G(v)$ will denote the set of neighbours or \emph{open neighbourhood} of $v$ in $G$. 
The \emph{closed neighbourhood}, denoted by $N_G[v]$, equals $N_G(v) \cup \{v\}$. If there is no ambiguity, we will simple write $N(v)$ or $N[v]$. Two vertices $x,y\in V(G)$  are \textit{twins} in $G$ if $N_G[x]=N_G[y]$ or $N_G(x)=N_G(y)$. If $N_G[x]=N_G[y]$, they are said to be \emph{true twins}, whereas if $N_G(x)=N_G(y)$ they are said to be \emph{false twins}.  The diameter of a  graph $G$ is denoted by $D(G)$. We recall that a graph $G$ is $2$-antipodal if for each vertex $x\in V(G)$ there exists exactly one vertex $y\in V(G)$ such that $d_G(x,y)=D(G)$. For instance, even cycles $C_{2k}$ and the hypercubes $Q_r$ are $2$-antipodal graphs. Given a graph $G$ and $W\subset V(G)$, we define $\langle W\rangle_G$ as the subgraph of $G$ induced by $W$.
For the remainder of the paper, definitions will be introduced whenever a concept is needed.

\section{Main Tools and Examples}\label{Examples}

It was shown in \cite{Chartrand2000} that $\dim(G)=1$ if and only if $G$ is a path.  It now readily follows that $\dim_s(G)=1$ if and only if $G$ is a path. 
Since any strong metric basis of a path is composed by a leaf, we can state the following remark. 
\begin{remark}\label{familyPaths}
Let $\mathcal{G}$ be a family of
connected graphs defined on a common vertex set. 
Then $\Sd_{s}(\mathcal{G})=1$ if and only if $\mathcal{G}$ is a collection of paths that share a common leaf.

\end{remark}
 
At the other extreme we see that $\dim_s(G)=n-1$ if and only if $G$ is the complete graph  of order $n$.   For a family of graphs we have the following remark.

\begin{remark}
Let $\mathcal{G}$ be a family of
connected graphs defined on a common vertex set. If $K_{n}\in \mathcal{G}$, then 
\[
\Sd_{s}(\mathcal{G})=n-1.
\]
\end{remark}

We now describe the approach developed in \cite{Oellermann2007} of transforming the problem of finding the strong metric dimension of a graph to the vertex cover problem.  A vertex $u$ of $G$ is \emph{maximally distant} from $v$ if for every vertex $w\in N_G(u)$, $d_G(v,w)\le d_G(u,v)$. The collection of all vertices of $G$ that are maximally distant from some vertex of the graph is called the {\em boundary} of the graph, see \cite{Brevsar2008,Caceres2005}, and is denoted by $\partial(G)$\footnote{In fact, the boundary $\partial(G)$ of a graph was defined first in \cite{Chartrand2003d} as the subgraph of $G$ induced by the set mentioned in our article with the same notation.}. If $u$ is maximally distant from $v$ and $v$ is maximally distant from $u$, then we say that $u$ and $v$ are \emph{mutually maximally distant}. Let  $S=\{u\in V(G):$ there exists $v\in V(G)$  such that $u,v$  are mutually maximally distant$\}$.  It is readily seen that $S\subseteq \partial(G)$. 
If $u$ is maximally distant from $v$, and $v$ is not maximally distant from $u$, then $v$ has a neighbour $v_1$, such that $d_G(v_1, u) > d_G(v,u)$, {\em i.e.}, $d_G(v_1, u) =d_G(v,u)+1$. It is easily seen that $u$ is maximally distant from $v_1$. If $v_1$ is not maximally distant from $u$, then $v_1$ has a neighbour $v_2$, such that $d_G(v_2, u) >d_G(v_1,u)$. Continuing in this manner we construct a sequence of vertices $v_1,v_2, \ldots$ such that $d_G(v_{i+1}, u) > d_G(v_i, u)$ for every $i$. Since $G$ is finite this sequence terminates with some $v_k$. Thus for all neighbours $x$ of $v_k$ we have $d_G(v_k,u) \ge d_G(x,u)$, and so $v_k$ is maximally distant from $u$ and $u$ is maximally distant from $v_k$. Hence every boundary vertex belongs to $S$. Certainly  $\partial(G)=S$.

 For some basic graph classes, such as complete graphs $K_n$, complete bipartite graphs $K_{r,s}$,  cycles $C_n$ and hypercube graphs $Q_k$, the boundary is simply the whole vertex set.
It is not difficult to see that this property also holds for all  $2$-antipodal graphs and for all distance-regular graphs.
Notice that the boundary of a tree consists of its leaves. A vertex  of a graph  is a {\em simplicial vertex} if the subgraph induced by  its neighbours is a complete graph. Given a graph $G$, we denote by $\sigma(G)$ the set of simplicial vertices of $G$.
It is readily seen that $\sigma(G)\subseteq \partial(G)$.

We use the notion of ``strong resolving graph'' based on a concept introduced in \cite{Oellermann2007}. The \emph{strong resolving graph} of $G$, denoted by $G_{SR}$,  has vertex set $V(G_{SR}) =V(G)$ where two vertices $u,v$ are adjacent in $G_{SR}$ if and only if $u$ and $v$ are mutually maximally distant in $G$.

A set $S$ of vertices of $G$ is a \emph{vertex cover} of $G$ if every edge of $G$ is incident with at least one vertex of $S$. The \emph{vertex cover number} of $G$, denoted by $\beta(G)$, is the smallest cardinality of a vertex cover of $G$. We refer to a $\beta(G)$-set in a graph $G$ as a vertex cover  of cardinality $\beta(G)$. Oellermann and Peters-Fransen \cite{Oellermann2007} showed that the problem of finding the strong metric dimension of a connected graph $G$ can be transformed to the problem of finding the vertex cover number of $G_{SR}$.

\begin{theorem}{\em \cite{Oellermann2007}}
For any connected graph $G$,
$\dim_s(G) = \beta(G_{SR}).$ \label{lem_oellerman}
\end{theorem}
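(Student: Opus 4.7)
My plan is to prove both inequalities $\dim_s(G) \ge \beta(G_{SR})$ and $\dim_s(G) \le \beta(G_{SR})$ by relating strongly resolving pairs of vertices to edges of $G_{SR}$.

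\textbf{Lower bound.} The first key claim I would establish is: if $u$ and $v$ are mutually maximally distant in $G$, then the only vertices of $V(G)$ that strongly resolve the pair $\{u,v\}$ are $u$ and $v$ themselves. Indeed, suppose some $w \notin \{u,v\}$ strongly resolves $u,v$; by symmetry assume $v \in I_G[u,w]$. Then there is a shortest $u$--$w$ path through $v$, and since $v \neq w$ this path contains a neighbor $v'$ of $v$ with $d_G(u,v') = d_G(u,v)+1$. This contradicts the fact that $u$ is maximally distant from $v$. Consequently, any strong metric generator $S$ must intersect $\{u,v\}$ for every edge $uv$ of $G_{SR}$; that is, $S$ is a vertex cover of $G_{SR}$, giving $\dim_s(G) \ge \beta(G_{SR})$.

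\textbf{Upper bound.} For the reverse inequality, let $S$ be a vertex cover of $G_{SR}$. Given an arbitrary pair $u,v \in V(G)$, I will produce a mutually maximally distant pair $(u^*, v^*)$ such that \emph{both} $u^*$ and $v^*$ strongly resolve $u$ and $v$; since $S$ meets $\{u^*,v^*\}$, this will show $S$ is a strong metric generator. The construction mimics the argument from the excerpt showing $\partial(G) = S$: starting from $u_0 = u$, repeatedly replace $u_i$ by a neighbor $u_{i+1}$ with $d_G(u_{i+1},v) > d_G(u_i,v)$ until reaching $u^* = u_k$ maximally distant from $v$. By construction $d_G(u^*,v) = d_G(u^*,u)+d_G(u,v)$, so $u \in I_G[u^*,v]$ and $u^*$ strongly resolves $u,v$. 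Then, starting from $v_0 = v$, iterate analogously with respect to $u^*$ until reaching $v^* = v_m$ maximally distant from $u^*$; a short triangle-inequality check shows that the property of $u^*$ being maximally distant from each intermediate $v_i$ is preserved, so in the end $u^*$ and $v^*$ are mutually maximally distant.

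It remains to verify that $v^*$ strongly resolves $u,v$. From the two construction steps, $d_G(v^*,u^*) = d_G(v^*,v)+d_G(v,u^*)$ and $d_G(u^*,v) = d_G(u^*,u)+d_G(u,v)$. Adding gives $d_G(v^*,u^*) = d_G(v^*,v) + d_G(u^*,u) + d_G(u,v)$, while the triangle inequality $d_G(v^*,u^*) \le d_G(v^*,u) + d_G(u,u^*)$ forces $d_G(v^*,u) \ge d_G(v^*,v) + d_G(v,u)$, hence equality. Thus $v \in I_G[v^*,u]$, as desired. The main technical obstacle is precisely this second extension: ensuring that stretching away from $v$ does not destroy the maximal-distance property of $u^*$, so that $(u^*,v^*)$ ends up being a genuine edge of $G_{SR}$; once that is handled, the two inequalities combine to give $\dim_s(G) = \beta(G_{SR})$.
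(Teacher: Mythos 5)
Your proof is correct, and it is essentially the standard argument: the paper states this theorem as a cited result from Oellermann and Peters-Fransen without giving a proof, and your upper-bound construction is exactly the ``climbing'' technique the paper itself sketches in Section 2 when showing that every boundary vertex is mutually maximally distant with some vertex, while your lower bound is the usual observation that a mutually maximally distant pair can only be strongly resolved by one of its own two vertices. The only (immaterial) slip is in the lower bound: the neighbour $v'$ of $v$ with $d_G(u,v')=d_G(u,v)+1$ contradicts the assertion that $v$ is maximally distant from $u$, not that $u$ is maximally distant from $v$, but since the pair is assumed mutually maximally distant the contradiction stands either way.
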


There are some families of graphs for which the strong resolving graphs can be obtained relatively easily. We state some of these here since we need to refer to these in other sections of the paper.

\begin{observation}\label{observation1}
\mbox{ }
\begin{enumerate}[{\rm(a)}]
\item If $\partial(G)=\sigma(G)$, then $G_{SR}\cong K_{\partial(G)}$. In particular, $(K_n)_{SR}\cong K_n$ and for any tree $T$  with $l(T)$ leaves, $(T)_{SR}\cong K_{l(T)}$.

\item For any $2$-antipodal graph $G$ of order $n$, $G_{SR}\cong \bigcup_{i=1}^{\frac{n}{2}} K_2$. Even cycles are $2$-antipodal. Thus,  $(C_{2k})_{SR}\cong \bigcup_{i=1}^{k} K_2$.

\item For odd cycles $(C_{2k+1})_{SR}\cong C_{2k+1}$.
\end{enumerate}
\end{observation}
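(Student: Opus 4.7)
The plan is to handle the three parts separately, exploiting the characterization that $G_{SR}$ is the graph on $V(G)$ whose edges are MMD pairs of $G$.

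\textbf{Part (a).} Assuming $\partial(G) = \sigma(G)$, I will show that any two simplicial vertices are MMD, which combined with the fact that non-boundary vertices are isolated in $G_{SR}$ yields $G_{SR} \cong K_{\partial(G)}$. For $u, v \in \sigma(G)$ and any neighbor $w$ of $u$, take a shortest $v$--$u$ path and let $u'$ be its penultimate vertex, so $u' \in N_G(u)$ and $d_G(v, u') = d_G(v, u) - 1$. Since $N_G(u)$ is a clique, either $w = u'$ or $w \sim u'$, and hence $d_G(v, w) \le d_G(v, u') + 1 = d_G(v, u)$. Thus $u$ is maximally distant from $v$; symmetrically $v$ is maximally distant from $u$. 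The particular cases then follow: $(K_n)_{SR} \cong K_n$ because every vertex of $K_n$ is simplicial, and $(T)_{SR} \cong K_{l(T)}$ because the leaves of a tree are exactly its simplicial vertices and exactly its boundary vertices (each leaf has a one-vertex neighborhood, hence is simplicial; each non-leaf has two non-adjacent neighbors, fails to be simplicial, and lies on a path between two leaves, so it is not maximally distant from any vertex).

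\textbf{Part (b).} Let $G$ be 2-antipodal with antipode map $\tau$. First, $\tau$ is an involution, since $d_G(v, \tau(v)) = D(G)$ makes $v$ a vertex at maximum distance from $\tau(v)$, forcing $\tau(\tau(v)) = v$ by uniqueness. Next, $v$ and $\tau(v)$ are MMD because $d_G(w, \tau(v)) \le D(G) = d_G(v, \tau(v))$ for every neighbor $w$ of $v$, and symmetrically. The core step is to show these are the only MMD pairs. Suppose $u$ is maximally distant from $v$ with $u \ne \tau(v)$, so $d_G(u, v) < D(G)$. Taking a shortest $u$--$\tau(v)$ path $u = u_0, u_1, \ldots, u_k = \tau(v)$ with $k \ge 1$, the MD condition gives $d_G(v, u_1) \le d_G(v, u)$, while the triangle inequality yields $d_G(v, u_1) \ge d_G(v, \tau(v)) - d_G(u_1, \tau(v)) = D(G) - k + 1$, so $d_G(u, v) + k \ge D(G) + 1$. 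Combining this with the symmetric bound obtained by applying the MD condition to $v$ along a shortest $v$--$\tau(u)$ path, together with the uniqueness of $\tau(v)$ as the sole vertex at distance $D(G)$ from $v$, forces $d_G(u, v) = D(G)$, that is $u = \tau(v)$, a contradiction. Hence $G_{SR}$ is a perfect matching $\bigcup_{i=1}^{n/2} K_2$. Since $C_{2k}$ is 2-antipodal with $\tau(i) = i + k \pmod{2k}$, the corollary follows.

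\textbf{Part (c).} Label $V(C_{2k+1}) = \mathbb{Z}_{2k+1}$ with $i \sim i + 1 \pmod{2k+1}$. The diameter is $k$, and the vertices at distance $k$ from $i$ are exactly $i + k$ and $i + k + 1$. A direct check confirms that $i$ is MMD with both: the neighbors of $i + k$ are at distances $k - 1$ and $k$ from $i$, both $\le k$, and symmetrically for $i$'s neighbors; conversely, any vertex at distance less than $k$ from $i$ has a neighbor strictly farther from $i$, so it fails the MD condition. Hence $G_{SR}$ is 2-regular with edges $\{i, i + k\}$ and $\{i, i + k + 1\}$. Because $\gcd(k, 2k + 1) = 1$, the iteration $i \mapsto i + k$ visits every element of $\mathbb{Z}_{2k+1}$, so $G_{SR}$ is connected and therefore a single cycle of length $2k+1$.

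The main obstacle is the uniqueness step in part (b): 2-antipodality directly gives a unique \emph{global} maximum of $d_G(v, \cdot)$ at $\tau(v)$, but ruling out non-antipodal \emph{local} maxima (vertices maximally distant from $v$ without being at diameter distance) requires carefully combining the MD conditions on both sides with the sharp refinement of the triangle inequality that arises from the uniqueness of $\tau(v)$.
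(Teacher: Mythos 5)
Your parts (a) and (c) are correct and complete (the paper states the observation without proof, treating it as readily seen): the clique argument via the penultimate vertex of a shortest $v$--$u$ path does show that any two simplicial vertices are mutually maximally distant, and your analysis of $C_{2k+1}$, including the $\gcd(k,2k+1)=1$ connectivity argument, is fine.

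Part (b), however, has a genuine gap at exactly the step you flag as the main obstacle, and it cannot be closed. The two inequalities you actually derive are $d_G(u,v)+d_G(u,\tau(v))\ge D(G)+1$ and $d_G(u,v)+d_G(v,\tau(u))\ge D(G)+1$; combined with the uniqueness information, which only yields $d_G(u,\tau(v))\le D(G)-1$ and $d_G(v,\tau(u))\le D(G)-1$, they give $d_G(u,v)\ge 2$, not $d_G(u,v)=D(G)$. No further argument can force the conclusion in this generality, because the claim that every mutually maximally distant pair of a $2$-antipodal graph is an antipodal pair is false under the definition used here. Take the M\"{o}bius--Kantor graph $GP(8,3)$, with outer vertices $o_0,\ldots,o_7$, inner vertices $i_0,\ldots,i_7$, and edges $o_jo_{j+1}$, $o_ji_j$, $i_ji_{j+3}$ (indices modulo $8$). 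Its distance distribution from every vertex is $1,3,6,5,1$, so it is $2$-antipodal with $D=4$; yet $d(o_0,i_2)=3$, every vertex of $N(o_0)=\{o_1,o_7,i_0\}$ is at distance $2$ from $i_2$, and every vertex of $N(i_2)=\{o_2,i_5,i_7\}$ is at distance $2$ from $o_0$, so $o_0$ and $i_2$ are mutually maximally distant without being antipodal, and $G_{SR}$ is not a disjoint union of copies of $K_2$. What you can prove is the statement for graphs in which $d_G(u,\tau(v))=D(G)-d_G(u,v)$ for all $u,v$ (equivalently, $d_G(v,\cdot)$ has no local maxima other than $\tau(v)$); this covers even cycles, hypercubes and cocktail-party graphs, and in particular the assertion $(C_{2k})_{SR}\cong\bigcup_{i=1}^{k}K_2$ that the rest of the paper relies on: in $C_{2k}$ a vertex at distance less than $k$ from $v$ always has a neighbour strictly farther from $v$, so only antipodal pairs are mutually maximally distant. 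You should either add such a hypothesis or prove the even-cycle case directly, as you did for odd cycles.
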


From this observation  it is easy to construct several families of graphs ${\cal G}$ satisfying $\Sd_s({\cal G})=\dim_s(G)$, for some $G\in {\cal G}$. 
We introduce the following remarks as straightforward examples. 

\begin{remark}\label{famTressOneResolvesAll}
Let ${\cal G}$ be a family of trees defined on a common vertex set and let $G\in {\cal G}$.   If   $\sigma(G)\supseteq \sigma(G')$, for all $G'\in {\cal G} $, then $\Sd_s({\cal G})=\dim_s(G)$.
\end{remark}

\begin{remark}
Let ${\cal G}$ be a family of $2$-antipodal graphs defined on a common vertex set $V$.  If there exits a partition  $\{V_1,V_2\}$  of $V$ such that for every $u\in V_1$ and every $G\in {\cal G}$, the only vertex diametral to $v$ in $G$ belongs to $V_2$, then 
 $\Sd_s({\cal G})=\dim_s(G)=\frac{|V|}{2}$,  for all $G\in {\cal G} $.
\end{remark}

For a graph $G$ of order $n$ and a graph $H$, the \emph{corona product} of $G$ and $H$, denoted as $G \odot H$, is the graph obtained from $G$ and $H$ by taking one copy of $G$ and $n$ copies of $H$, and joining every vertex $v_i$ of $G$ to every vertex of the $i$-th copy of $H$. The next result is a direct consequence of the fact that no vertex of $G$ is mutually maximally distant with any vertex of $G \odot H$. 

\begin{remark}
Let ${\cal G}=\{G_1,G_2,\ldots,G_k\}$ be a family composed by connected non-trivial graphs, defined on a common vertex set, and let $H$ be a non-trivial graph. Then, for any $i \in \{1,\ldots,k\}$, $$\Sd_s(G_1 \odot H, G_2 \odot H, \ldots, G_k \odot H)=\dim_s(G_i \odot H).$$
\end{remark}

The result above allows to extend results obtained in \cite{Kuziak2013} for $\dim_s(G \odot H)$ to families composed by corona product graphs.

Although it is relatively easy to construct some families of graphs having a given simultaneous strong metric dimension, the problem of computing this parameter is $NP$-hard, even when restricted to families of trees, as we shall show in Section \ref{SectionComplexity}.

\smallskip
\section{Basic Bounds}\label{SectionBounds}
Since every strong metric generator is also a metric generator, for  any family ${\cal G}$ of connected graphs defined  on a common vertex set $V$, 
$$ 1\le \Sd({\cal G})\le \Sd_s({\cal G})\le |V|-1.$$
The case $\Sd({\cal G})=1$ was previously discussed in Remark  
\ref{familyPaths}.  
For the case $\Sd({\cal G})=|V|-1$, consider, for instance, a family $\mathcal{G}$ composed by $r+1$ star graphs of the form $K_{1,r}$, defined on a common vertex set $V$, all of them having different centres. In this case, only one vertex can be excluded from any simultaneous strong metric basis of ${\cal G}$, so that $\Sd_s({\cal G})=|V|-1$. The following result characterizes the graph families for which $\Sd({\cal G})=|V|-1$.

\begin{theorem}\label{ThGeneralUpperBound}
Let $\mathcal{G}$ be a family of connected graphs defined on a common vertex set  $V.$ Then $\Sd_{s}(\mathcal{G})=\left\vert V\right\vert -1$ if and only if for
every pair $u,v\in V,$ there exists a graph $G_{uv}\in \mathcal{G}$ such
that $u$ and $v$ are mutually maximally distant in $G_{uv}.$
\end{theorem}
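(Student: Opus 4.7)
The plan is to reduce the statement to vertex covers of strong resolving graphs via Theorem~\ref{lem_oellerman}, and then exploit the elementary fact that a graph $H$ on $|V|$ vertices has $\beta(H)=|V|-1$ exactly when $H$ is the complete graph. Concretely, I would open with the observation that a set $S\subset V$ is a simultaneous strong metric generator for $\mathcal{G}=\{G_1,\dots,G_k\}$ if and only if $S$ is a strong metric generator for each $G_i$, and therefore, by Theorem~\ref{lem_oellerman}, if and only if $S$ is a vertex cover of $(G_i)_{SR}$ for every $i$. Equivalently, $S$ is a vertex cover of the graph $H$ on vertex set $V$ whose edge set is $\bigcup_{i=1}^{k} E((G_i)_{SR})$. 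Note also that the edge $uv$ appears in $H$ precisely when $u$ and $v$ are mutually maximally distant in some $G_{uv}\in\mathcal{G}$.

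From here each direction becomes short. For the forward direction I would argue by contrapositive: suppose there is a pair $u,v\in V$ that is not mutually maximally distant in any $G\in\mathcal{G}$. Then $uv\notin E(H)$, so $\{u,v\}$ is an independent set of $H$, which means $V\setminus\{u,v\}$ is a vertex cover of $H$ and hence a simultaneous strong metric generator for $\mathcal{G}$. This forces $\Sd_s(\mathcal{G})\le |V|-2$, contradicting the hypothesis $\Sd_s(\mathcal{G})=|V|-1$.

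For the reverse direction, assume that every pair $u,v\in V$ is mutually maximally distant in some $G_{uv}\in\mathcal{G}$, so that $H$ is the complete graph on $V$. If $\Sd_s(\mathcal{G})\le |V|-2$, take a simultaneous strong metric basis $S$; then $|V\setminus S|\ge 2$, so we may pick distinct $u,v\in V\setminus S$. But $uv\in E(H)$ and neither endpoint lies in $S$, contradicting that $S$ is a vertex cover of $H$. Combined with the general upper bound $\Sd_s(\mathcal{G})\le |V|-1$ mentioned at the beginning of Section~\ref{SectionBounds} (obtained from the fact that $V\setminus\{v\}$ strongly resolves every pair in every $G_i$), this forces $\Sd_s(\mathcal{G})=|V|-1$.

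I do not foresee a genuine obstacle: the only thing to be careful about is to state cleanly that the simultaneous version of Theorem~\ref{lem_oellerman} amounts to taking the edge union of the graphs $(G_i)_{SR}$, and to observe that enlarging a strong metric generator preserves the property, so that $\Sd_s(\mathcal{G})\le|V|-2$ is equivalent to the existence of \emph{some} pair $\{u,v\}$ with $V\setminus\{u,v\}$ being a simultaneous strong metric generator. Once that translation is in place, the rest is an application of the elementary vertex-cover/independent-set complementarity.
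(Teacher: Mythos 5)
Your proposal is correct and follows essentially the same route as the paper: both reduce simultaneous strong metric generators to vertex covers of the strong resolving graphs $(G_i)_{SR}$ and observe that a pair $u,v$ must be covered precisely when $u$ and $v$ are mutually maximally distant in some member of $\mathcal{G}$; your auxiliary union graph $H$ merely packages the per-pair argument that the paper carries out directly. The only point to state carefully is that you rely on the set-level correspondence (a set is a strong metric generator for $G$ if and only if it is a vertex cover of $G_{SR}$), which is formally stronger than the numerical equality quoted in Theorem~\ref{lem_oellerman}, but it is exactly the fact from \cite{Oellermann2007} that the paper's own proof also uses.
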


\begin{proof} 
If $\Sd_{s}(\mathcal{G})=\left\vert V\right\vert -1,$
then for every $v\in V,$ the set $V-\left\{ v\right\} $ is a 
simultaneous strong metric basis of $\mathcal{G}$ and, as a consequence, for every $%
u\in V-\left\{ v\right\} $ there exists a graph $G_{uv}\in \mathcal{G}$ such
that the set $V-\left\{ u,v\right\} $ is not a strong metric generator for $%
G_{uv}$. This means that 
the set $V-\left\{ u,v\right\} $ is not a 
vertex cover of $  \left( G_{uv}\right) _{SR}$ and then $u$ and $v$ must be adjacent in $(G_{uv})_{SR}$ or, equivalently, they are mutually maximally
distant in $G_{uv}.$

Conversely, if for every $u,v\in V$ there exists a graph $G_{uv}\in \mathcal{%
G}$ such that $u$ and $v$ are mutually maximally distant in $G_{uv},$ then
for any strong simultaneous metric basis $B$ of $\mathcal{G}$ either $u\in B$
or $v\in B.$ Hence, all but one element of $V$ must belong to $B.$ Therefore 
$ \left\vert B\right\vert \geq \left\vert V\right\vert -1$ and we can conclude that 
$\Sd_{s}(\mathcal{G})=\left\vert
V\right\vert -1.$
\end{proof}

Given a family $\mathcal{G=}\left\{ G_{1},G_{2},\ldots ,G_{k}\right\} $
of connected graphs defined on a common vertex set $V$, we define $\partial({\cal G})=\displaystyle{\bigcup_{G \in {\cal G}}}\partial(G)$. The following general considerations are
true.

\begin{observation}
For any familly $\mathcal{G=}\left\{ G_{1},G_{2},\ldots ,G_{k}\right\} $ of
connected graphs defined on a common vertex set $V$ and any subfamily $\mathcal{H}\subset \mathcal{G}$.
\[
\Sd_{s}(\mathcal{H})\leq \Sd_{s}(\mathcal{G)}\leq \min \left\{ \left\vert
\partial({\cal G})\right\vert -1,\sum_{i=1}^{k}\dim _{s}(G_{i})\right\} . 
\] 
In particular, 
\[
\max_{i\in \left\{ 1,\ldots ,k\right\} }\left\{ \dim _{s}(G_{i})\right\}
\leq \Sd_{s}(\mathcal{G)}. 
\]
\end{observation}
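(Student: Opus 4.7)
The plan is to establish the three inequalities of the chain independently; the ``in particular'' bound $\max_i \dim_s(G_i) \le \Sd_s(\mathcal{G})$ then falls out as an immediate consequence of the first.

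For the monotonicity inequality $\Sd_s(\mathcal{H}) \le \Sd_s(\mathcal{G})$, I would argue directly from the definitions: a simultaneous strong metric basis $B$ of $\mathcal{G}$ strongly resolves every $G \in \mathcal{G}$, and hence every $G \in \mathcal{H}$, so $B$ is already a simultaneous strong metric generator for $\mathcal{H}$. Specializing this to $\mathcal{H} = \{G_i\}$ yields $\dim_s(G_i) \le \Sd_s(\mathcal{G})$ for each $i$, giving the final ``in particular'' statement. For the sum upper bound, I would pick a strong metric basis $B_i$ of each $G_i$ and set $S = \bigcup_{i=1}^k B_i$; any pair of vertices in a given $G_i$ is strongly resolved by some element of $B_i \subseteq S$, so $S$ is a simultaneous strong metric generator of cardinality at most $\sum_i |B_i| = \sum_i \dim_s(G_i)$.

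The substantive bound is $\Sd_s(\mathcal{G}) \le |\partial(\mathcal{G})| - 1$. Here I would use the vertex-cover framework underlying Theorem~\ref{lem_oellerman}: a set $S$ is a strong metric generator for $G$ precisely when it is a vertex cover of $G_{SR}$, and every edge of $G_{SR}$ has both endpoints in $\partial(G)$ (since such edges correspond to mutually maximally distant pairs, both endpoints of which are boundary vertices). Fix any $u \in \partial(\mathcal{G})$, which is nonempty whenever the $G_i$ are nontrivial connected graphs, and set $W := \partial(\mathcal{G}) \setminus \{u\}$. For each $i$, every edge of $(G_i)_{SR}$ joins two distinct vertices of $\partial(G_i) \subseteq \partial(\mathcal{G})$; at least one of those two endpoints must differ from $u$ and therefore lies in $W$, so $W$ is a vertex cover of $(G_i)_{SR}$ and hence a strong metric generator for $G_i$. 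Consequently $W$ is a simultaneous strong metric generator for $\mathcal{G}$ of cardinality $|\partial(\mathcal{G})|-1$, as required. The only small point deserving mention is the set-level equivalence between strong metric generators and vertex covers of $G_{SR}$, which is not literally the minimum-cardinality identity of Theorem~\ref{lem_oellerman} but is explicit in its proof in \cite{Oellermann2007}; this is the closest thing to an obstacle, and it is essentially a bookkeeping remark.
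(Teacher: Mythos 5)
Your proof is correct, and it matches what the paper intends: the statement is given as an observation without an explicit proof, precisely because it follows from the routine arguments you give (monotonicity directly from the definition, the union of strong metric bases for the sum bound, and the fact that all edges of each $(G_i)_{SR}$ lie inside $\partial(G_i)\subseteq\partial(\mathcal{G})$, so $\partial(\mathcal{G})$ minus one vertex is a vertex cover of every $(G_i)_{SR}$). Your one flagged point, the set-level equivalence between strong metric generators for $G$ and vertex covers of $G_{SR}$, is indeed the equivalence the paper itself uses freely (e.g.\ in the proof of Theorem~\ref{ThGeneralUpperBound}), so nothing is missing.
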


The above inequalities are sharp. For instance, consider a family $\mathcal{H}_1$ of graphs defined on a vertex set $V$, where some particular  vertex $u\in V$ belongs to a simultaneous strong metric basis $B$. Consider also a family of paths $\mathcal{H}_2$, defined on $V$, sharing all of them this particular vertex $u$ as one of their leaves.
Then $B$ is a simultaneous strong metric basis  of the family $\mathcal{H}_1 \cup \mathcal{H}_2$,  so that $\Sd_s(\mathcal{H}_1 \cup \mathcal{H}_2) =\Sd_s(\mathcal{H}_1)$. On the other hand, a family of trees as the one described in Remark~\ref{famTressOneResolvesAll}, where the set of leaves of one tree contains the sets of leaves of every other tree in the family, satisfies $\Sd_s({\cal G})=|\partial({\cal G})|-1$. Finally, consider the family ${\cal G}=\{G_1,G_2\}$ shown in Figure~\ref{figExampleUpperBoundSum}. It is easy to see that $\Sd_s({\cal G})=\dim_s(G_1)+\dim_s(G_2)=|\partial({\cal G})| - 2 < |\partial({\cal G})| - 1$.

\begin{figure}[h]
\begin{center}

\begin{tikzpicture}
[inner sep=0.7mm, place/.style={circle,draw=black,
fill=white,thick},xx/.style={circle,draw=black,fill=black!99,thick},
transition/.style={rectangle,draw=black,fill=black!20,thick},line width=1pt,scale=0.5]
\coordinate (A1) at (0,4);
\coordinate (B1) at (0,0);
\coordinate (C1) at (2,2);
\coordinate (D1) at (4,2);
\coordinate (E1) at (6,2);
\coordinate (F1) at (8,2);
\coordinate (G1) at (10,4);
\coordinate (H1) at (10,0);

\coordinate (A2) at (14,4);
\coordinate (B2) at (14,0);
\coordinate (C2) at (16,2);
\coordinate (D2) at (18,2);
\coordinate (E2) at (20,2);
\coordinate (F2) at (22,2);
\coordinate (G2) at (24,4);
\coordinate (H2) at (24,0);

\draw[black] (A1) -- (C1) -- (D1) -- (E1) -- (F1) -- (G1);
\draw[black] (B1) -- (C1);
\draw[black] (F1) -- (H1);

\draw[black] (A2) -- (C2) -- (D2) -- (E2) -- (F2) -- (G2);
\draw[black] (B2) -- (C2);
\draw[black] (F2) -- (H2);

\node at (A1) [place]  {};
\coordinate [label=center:{$u_1$}] (u11) at (-0.5,4.5);
\node at (B1) [place]  {};
\coordinate [label=center:{$u_2$}] (u21) at (-0.5,-0.5);
\node at (C1) [place]  {};
\coordinate [label=center:{$v_1$}] (v11) at (2,1.3);
\node at (D1) [place]  {};
\coordinate [label=center:{$v_2$}] (v21) at (4,1.3);
\node at (E1) [place]  {};
\coordinate [label=center:{$v_3$}] (v31) at (6,1.3);
\node at (F1) [place]  {};
\coordinate [label=center:{$v_4$}] (v41) at (8,1.3);
\node at (G1) [place]  {};
\coordinate [label=center:{$u_3$}] (u31) at (10.5,4.5);
\node at (H1) [place]  {};
\coordinate [label=center:{$u_4$}] (u41) at (10.5,-0.5);

\node at (A2) [place]  {};
\coordinate [label=center:{$v_1$}] (v21) at (13.5,4.5);
\node at (B2) [place]  {};
\coordinate [label=center:{$v_2$}] (v22) at (13.5,-0.5);
\node at (C2) [place]  {};
\coordinate [label=center:{$u_1$}] (u12) at (16,1.3);
\node at (D2) [place]  {};
\coordinate [label=center:{$u_2$}] (u22) at (18,1.3);
\node at (E2) [place]  {};
\coordinate [label=center:{$u_3$}] (u32) at (20,1.3);
\node at (F2) [place]  {};
\coordinate [label=center:{$u_4$}] (u42) at (22,1.3);
\node at (G2) [place]  {};
\coordinate [label=center:{$v_3$}] (v32) at (24.5,4.5);
\node at (H2) [place]  {};
\coordinate [label=center:{$v_4$}] (v42) at (24.5,-0.5);

\coordinate [label=center:{$G_1$}] (T1) at (5,-1);
\coordinate [label=center:{$G_2$}] (T2) at (19,-1);

\end{tikzpicture}

\caption{The family ${\cal G}=\{G_1,G_2\}$ satisfies $\Sd_s({\cal G})=\dim_s(G_1)+\dim_s(G_2)=6$.}
\label{figExampleUpperBoundSum}
\end{center}
\end{figure}
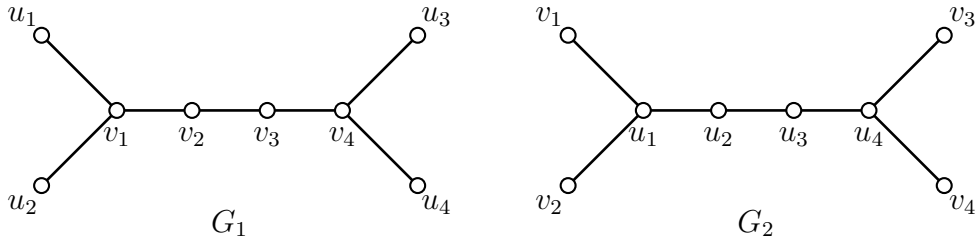

Next, we recall an upper bound for $\dim_s(G)$ obtained in \cite{Kuziak2013}. We say that $X \subseteq V(G)$ is a
\textit{twin-free clique} in $G$  if $X$ is a clique containing no true twins. The \textit{twin-free clique number} of
$G$, denoted by $ \varpi(G)$, is the maximum cardinality among all twin-free cliques in $G$.
\begin{theorem}{\rm \cite{Kuziak2013}}\label{ThTwinsFreeClique}
For any  connected graph $G$    of order $n \ge  2$,  $$\dim_s(G) \le  n - \varpi(G).$$
Moreover, if $D(G) = 2$, then the equality holds.
\end{theorem}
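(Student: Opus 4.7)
The plan is to translate the problem into the strong resolving graph $G_{SR}$ via Theorem \ref{lem_oellerman} and then exploit Gallai's identity $\beta(G_{SR})=n-\alpha(G_{SR})$, so that the desired inequality $\dim_s(G)\le n-\varpi(G)$ becomes $\alpha(G_{SR})\ge \varpi(G)$. In other words, I would show that every twin-free clique $X$ in $G$ is an independent set in $G_{SR}$; the maximum twin-free clique would then give an independent set of size $\varpi(G)$.

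The core observation I would establish first is the following distance-one characterization: if $u,v\in V(G)$ are adjacent in $G$, then $u$ and $v$ are mutually maximally distant if and only if they are true twins. This is a short computation from the definition: since $d_G(u,v)=1$, the vertex $u$ is maximally distant from $v$ exactly when every neighbour $w$ of $u$ satisfies $d_G(v,w)\le 1$, i.e., $N_G(u)\subseteq N_G[v]$, which (since $u\in N_G[v]$) is equivalent to $N_G[u]\subseteq N_G[v]$. The symmetric condition yields $N_G[v]\subseteq N_G[u]$, so mutual maximal distance at distance one is precisely $N_G[u]=N_G[v]$. Consequently, for a twin-free clique $X$, any two vertices of $X$ are adjacent in $G$ but not true twins, hence not mutually maximally distant, hence not adjacent in $G_{SR}$. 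This yields $\alpha(G_{SR})\ge \varpi(G)$ and therefore $\dim_s(G)=\beta(G_{SR})=n-\alpha(G_{SR})\le n-\varpi(G)$.

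For the equality under the hypothesis $D(G)=2$, I would prove the reverse inequality $\alpha(G_{SR})\le \varpi(G)$ by showing that every independent set $I$ of $G_{SR}$ is a twin-free clique of $G$. The key extra input is: when $D(G)=2$, any two non-adjacent vertices of $G$ are mutually maximally distant. Indeed, if $uv\notin E(G)$ then $d_G(u,v)=2$, and for every neighbour $w$ of $u$ we have $d_G(v,w)\le D(G)=2=d_G(u,v)$, so $u$ is maximally distant from $v$; by symmetry the condition is mutual. Thus in $G_{SR}$ every non-edge of $G$ becomes an edge, so an independent set $I$ in $G_{SR}$ must consist of pairwise adjacent vertices of $G$; combined with the distance-one characterization above (no true twin pair can lie in $I$), the set $I$ is a twin-free clique, proving $\alpha(G_{SR})\le \varpi(G)$.

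The only genuinely delicate step is the distance-one characterization of mutually maximally distant vertices; everything else is an assembly of Theorem \ref{lem_oellerman}, Gallai's identity, and the observation that the diameter-two hypothesis forces every non-edge of $G$ into $G_{SR}$. I do not foresee a real obstacle, but care is needed when writing the equivalence $N_G(u)\subseteq N_G[v]\Longleftrightarrow N_G[u]\subseteq N_G[v]$ to make sure the edge $uv$ is used correctly.
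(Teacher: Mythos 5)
Your proof is correct. Note that the paper itself does not prove Theorem~\ref{ThTwinsFreeClique} (it is quoted from the literature); the closest in-paper argument is the proof of its simultaneous generalization, Theorem~\ref{TheoremTwinclique}, and your route differs from that one in a mild but genuine way. For the upper bound, the paper argues directly: given a maximum twin-free clique $W$, for each pair $u,v\in W$ it produces a vertex $s\in V-W$ adjacent to exactly one of them, so that $u\in I_G[v,s]$ and $s$ strongly resolves the pair, whence $V-W$ is a strong metric generator. You instead pass through Theorem~\ref{lem_oellerman} and Gallai's identity $\beta(G_{SR})=n-\alpha(G_{SR})$, reducing everything to the clean structural fact that two adjacent vertices are mutually maximally distant if and only if they are true twins; your verification of that equivalence (via $N_G(u)\subseteq N_G[v]\Leftrightarrow N_G[u]\subseteq N_G[v]$, using $u\in N_G[v]$) is accurate, and it makes a twin-free clique an independent set of $G_{SR}$. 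For the equality under $D(G)=2$, both arguments are essentially the same: the paper observes that $d_G(u,v)=2$ or $N_G[u]=N_G[v]$ forces $u$ or $v$ into any strong metric generator (i.e., such pairs are edges of $G_{SR}$), so the complement of a strong metric basis is a twin-free clique; you phrase this as $\alpha(G_{SR})\le\varpi(G)$. What your version buys is a uniform vertex-cover/independence formulation in which both directions are dualities in $G_{SR}$; what the paper's direct construction buys is that it avoids Gallai's identity and extends verbatim to the simultaneous setting, since the resolving vertex $s$ is chosen outside $W$ for each graph of the family separately.
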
  

Our next result is an extension of Theorem \ref{ThTwinsFreeClique} to the case of the simultaneous strong metric dimension. We define a \textit{simultaneous twin-free clique} of a family ${\cal G}$ of graphs as a set which is a twin-free clique in every $G\in {\cal G}$. The \textit{simultaneous twin-free clique number} of
${\cal G}$, denoted by $ \Svarpi({\cal G})$, is the maximum cardinality among all simultaneous twin-free cliques of ${\cal G}$.

\begin{theorem}\label{TheoremTwinclique}
Let ${\cal G}$ be a  family of connected graphs of order $n\ge 2$ defined on a common vertex set. Then
$$\Sd_s({\cal G})\le n-\Svarpi({\cal G}).$$
Moreover, if every graph belonging to ${\cal G}$ has diameter two, then
$$\Sd_s({\cal G}) = n-\Svarpi({\cal G}).$$
\end{theorem}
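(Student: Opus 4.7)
The plan is to reduce the argument, via Theorem~\ref{lem_oellerman}, to properties of the strong resolving graphs $(G_i)_{SR}$. The key preliminary observation is: if $X$ is a twin-free clique in a graph $G$, then no two distinct vertices of $X$ are mutually maximally distant in $G$. Indeed, for $u,v\in X$ we have $uv\in E(G)$, so $u$ is maximally distant from $v$ iff every neighbour $w$ of $u$ satisfies $d_G(v,w)\leq 1$, i.e.\ $N_G[u]\subseteq N_G[v]$; symmetrically for $v$, and together these say $N_G[u]=N_G[v]$, which is excluded since $X$ has no true twins. Hence the vertices of $X$ form an independent set in $G_{SR}$, so $V\setminus X$ is a vertex cover of $G_{SR}$.

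For the upper bound, I would take $X$ to be a simultaneous twin-free clique of ${\cal G}$ with $|X|=\Svarpi({\cal G})$. Applying the observation above to each $G\in{\cal G}$, the set $V\setminus X$ is a vertex cover of $G_{SR}$ for every such $G$, so by Theorem~\ref{lem_oellerman} it is a strong metric generator of every $G\in{\cal G}$, i.e.\ a simultaneous strong metric generator. This yields $\Sd_s({\cal G})\leq |V\setminus X|=n-\Svarpi({\cal G})$.

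For the lower bound under the assumption $D(G)=2$ for every $G\in{\cal G}$, I would take any simultaneous strong metric basis $S$ and show that $W:=V\setminus S$ is a simultaneous twin-free clique, which gives $|W|\leq \Svarpi({\cal G})$, whence $\Sd_s({\cal G})=|S|\geq n-\Svarpi({\cal G})$. Fix $G\in{\cal G}$ and pick distinct $u,v\in W$. First, $u$ and $v$ must be adjacent in $G$: otherwise $d_G(u,v)=2$, and some $w\in S$ strongly resolves them, giving (say) $d_G(w,v)=d_G(w,u)+2$; since $D(G)=2$, this forces $d_G(w,u)=0$, so $w=u\notin S$, a contradiction. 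Second, $u$ and $v$ cannot be true twins in $G$: true twins are always mutually maximally distant (every $x\in N_G(u)\setminus\{v\}$ lies in $N_G[v]$, so $d_G(x,v)\leq 1=d_G(u,v)$, and similarly after swapping $u,v$), hence would form an edge of $G_{SR}$ uncovered by $S$, contradicting that $S$ is a vertex cover of $G_{SR}$ (Theorem~\ref{lem_oellerman}). Thus $W$ is a twin-free clique in every $G\in{\cal G}$.

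I expect the main obstacle to be in the equality part, specifically in pinning down the correct description of $G_{SR}$-adjacency under the diameter-two hypothesis; the upper bound is a clean extension of the single-graph argument, but the lower bound relies on the delicate fact that, when $D(G)=2$, the only obstruction to pairs in $V\setminus S$ being a twin-free clique comes from non-adjacency or true-twinship, both of which are ruled out by $S$ being a vertex cover of $G_{SR}$.
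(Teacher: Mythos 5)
Your proof is correct and follows essentially the same strategy as the paper: the complement of a maximum simultaneous twin-free clique is shown to be a simultaneous strong metric generator, and, under the diameter-two hypothesis, the complement of a simultaneous strong metric basis is shown to be a simultaneous twin-free clique (adjacency forced by diameter two, true-twinship excluded because true twins are mutually maximally distant). The only cosmetic difference is that you verify both halves in the language of vertex covers of $G_{SR}$ via Theorem~\ref{lem_oellerman}, whereas the paper checks strong resolution directly in the upper bound; the underlying fact you use (a set is a strong metric generator if and only if it covers all mutually maximally distant pairs) is exactly the tool the paper employs elsewhere, so this is the same argument in equivalent form.
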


\begin{proof}
Let $W$ be a simultaneous  twin-free clique in ${\cal G}$ of  maximum cardinality and let $G=(V,E)$ be a graph belonging to ${\cal G}$. We will show that $V - W$ is a strong metric generator for $G$. Since $W$ is a twin-free clique, for any two distinct vertices $u,v\in W$ there exists $s\in V - W$ such that either $s\in N_G(u)$ and $s\notin N_G(v)$ or $s\in N_G(v)$ and $s\notin N_G(u)$. Without loss of generality, we consider $s\in N_G(u)$ and $s\notin N_G(v)$. Thus, $u\in I_G[v,s]$ and, as a consequence,  $s$ strongly resolves $u$ and $v$. Therefore, $\Sd_s({\cal G})\le |V-W| = n - \Svarpi ({\cal G})$.

Now, suppose that every graph $G=(V,E)$ belonging to ${\cal G}$ has diameter two. Let $X\subset V$ be a simultaneous strong metric basis of ${\cal G}$ and let $u,v\in V$, $u\ne v$.   If $d_G(u,v)=2$ or $N_G[u]=N_G[v]$, for some $G\in {\cal G}$, then $u$ and $v$ are mutually maximally distant vertices of $G$, so $u\in X$ or $v\in X$. Hence, for any two distinct vertices $x,y\in V-X$ and any $G\in {\cal G}$ we have $d_G(x,y)=1$ and $N_G[x]\ne N_G[y]$. As a consequence, $V-X$ is a simultaneous twin-free clique of ${\cal G}$ and so $n-\Sd_s({\cal G})=|V-X|\le \Svarpi({\cal G})$. Therefore, $\Sd_s({\cal G})\ge n-\Svarpi({\cal G})$ and the result follows.
\end{proof}

\begin{corollary}
Let ${\cal G}$ be a  family  of graphs of diameter two and order $n\ge 2$ defined on a common vertex set. If ${\cal G}$ contains a triangle-free graph,  then 
$$ n-2\le\Sd_s({\cal G})\le n-1.$$
\end{corollary}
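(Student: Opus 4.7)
The plan is to read the bound off from Theorem \ref{TheoremTwinclique}, since the hypothesis that every graph in $\mathcal{G}$ has diameter two places us exactly in the equality case $\Sd_s(\mathcal{G}) = n - \Svarpi(\mathcal{G})$. Thus the corollary reduces entirely to the two-sided estimate $1 \le \Svarpi(\mathcal{G}) \le 2$.

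First I would handle the upper bound $\Sd_s(\mathcal{G}) \le n-1$, which follows from $\Svarpi(\mathcal{G}) \ge 1$: any single vertex is trivially a clique containing no true twins in every $G \in \mathcal{G}$, so a simultaneous twin-free clique of cardinality $1$ always exists. (Alternatively this bound already appears in the chain $\Sd_s(\mathcal{G}) \le |V|-1$ stated at the beginning of Section~\ref{SectionBounds}.)

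For the lower bound $\Sd_s(\mathcal{G}) \ge n-2$, I would show $\Svarpi(\mathcal{G}) \le 2$ using the triangle-free graph in the family. Let $H \in \mathcal{G}$ be triangle-free, and let $W$ be any simultaneous twin-free clique of $\mathcal{G}$. By definition $W$ is a clique in $H$; since $H$ contains no triangle, $|W| \le 2$. Taking the maximum over all simultaneous twin-free cliques gives $\Svarpi(\mathcal{G}) \le 2$, and combining with Theorem \ref{TheoremTwinclique} yields $\Sd_s(\mathcal{G}) = n - \Svarpi(\mathcal{G}) \ge n - 2$.

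There is no real obstacle here: the whole argument is a one-line application of Theorem \ref{TheoremTwinclique} together with the elementary fact that a triangle-free graph has clique number at most two. The only subtlety worth double-checking is that the definition of simultaneous twin-free clique indeed forces the set to be a clique in \emph{every} member of $\mathcal{G}$, and in particular in the triangle-free one, which is precisely what we use.
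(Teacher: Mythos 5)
Your proof is correct and follows the same route the paper intends: the corollary is exactly the equality case of Theorem~\ref{TheoremTwinclique} combined with the observation that $1\le \Svarpi({\cal G})\le 2$, the upper bound on $\Svarpi({\cal G})$ coming from the triangle-free member of the family and the lower bound from a single vertex being trivially a simultaneous twin-free clique. Nothing is missing.
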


Finally, we recall the following upper bound on $\dim_s(G)$, obtained in \cite{Yi2013}.

\begin{theorem}{\rm \cite{Yi2013}}\label{bound_n_min_diam_G}
For any connected graph $G$ of order $n$, $$\dim_s(G)\leq n-D(G).$$
\end{theorem}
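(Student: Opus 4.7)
The plan is to reduce the bound to the identity $\dim_s(G)=\beta(G_{SR})$ of Theorem \ref{lem_oellerman}. Since the vertex cover number and the independence number of any graph on $n$ vertices sum to $n$, and $G_{SR}$ has vertex set $V(G)$, it suffices to exhibit an independent set of size at least $D(G)$ in $G_{SR}$.

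First I would fix a diametral geodesic $x_0,x_1,\ldots,x_d$ in $G$ with $d=D(G)$, and take as candidate the set $I=\{x_0,x_1,\ldots,x_{d-1}\}$, which has exactly $D(G)$ elements. The crux is then to check that no two vertices of $I$ are mutually maximally distant in $G$, so that $I$ is independent in $G_{SR}$. For indices $0\le i<j\le d-1$, the neighbor $x_{j+1}\in N_G(x_j)$ exists because $j\le d-1$, and the inequality
\[
d_G(x_i,x_{j+1})=(j+1)-i>j-i=d_G(x_i,x_j),
\]
which follows from the fact that every subpath of a geodesic is itself a geodesic, shows that $x_j$ is not maximally distant from $x_i$ in $G$. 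Hence $x_i$ and $x_j$ are non-adjacent in $G_{SR}$.

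Having produced an independent set $I$ of size $D(G)$ in $G_{SR}$, we deduce $\alpha(G_{SR})\ge D(G)$ and therefore $\dim_s(G)=\beta(G_{SR})\le n-D(G)$. The only delicate point I anticipate is that one endpoint of the diametral path must be discarded when forming the candidate set: the pair $x_0,x_d$ may well be mutually maximally distant in $G$ (as occurs in every $2$-antipodal graph), so including both would destroy independence in $G_{SR}$. Restricting to $\{x_0,\ldots,x_{d-1}\}$ (or symmetrically to $\{x_1,\ldots,x_d\}$) is therefore essential, but once that choice is made the verification goes through without further obstacles.
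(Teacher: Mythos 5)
Your argument is correct. The paper itself does not prove this statement -- it is recalled from \cite{Yi2013} without proof -- so there is no internal proof to compare against; what you have produced is a legitimate self-contained derivation using exactly the machinery this paper sets up. The key steps all check out: by Theorem \ref{lem_oellerman}, $\dim_s(G)=\beta(G_{SR})$; since $G_{SR}$ is defined on all of $V(G)$, Gallai's identity gives $\beta(G_{SR})=n-\alpha(G_{SR})$; and your set $I=\{x_0,\ldots,x_{d-1}\}$ is indeed independent in $G_{SR}$, because for $i<j\le d-1$ the vertex $x_{j+1}\in N_G(x_j)$ satisfies $d_G(x_i,x_{j+1})=j+1-i>d_G(x_i,x_j)$ (subpaths of geodesics are geodesics), so $x_j$ is not maximally distant from $x_i$ and the pair cannot be mutually maximally distant. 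You are also right that dropping one endpoint is essential, since $x_0$ and $x_d$ are typically mutually maximally distant. Equivalently, your argument exhibits $V(G)-\{x_0,\ldots,x_{d-1}\}$ as a vertex cover of $G_{SR}$, i.e.\ a strong metric generator of cardinality $n-D(G)$, which is essentially the dual formulation of the original direct proof; the vertex-cover phrasing has the advantage of fitting seamlessly with the $G_{SR}$-based techniques used throughout this paper.
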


Given a graph family ${\cal G}$  defined on a common vertex set $V$, we define the parameter $\rho({\cal G})=|W|-1$, where $W \subseteq V$ is a maximum cardinality  set such that for every $G\in {\cal G}$ the subgraph  $\langle W \rangle_{G}$ induced by $W$ in $G$ is a path and there exists $w\in W$ which is a common leaf of all these paths.

\begin{theorem}\label{bound_V_min_len_common_path}
Let ${\cal G}$ be a family of graphs  defined on a common vertex set $V$. Then, $$\Sd_s({\cal G}) \le |V|-\rho({\cal G}).$$
\end{theorem}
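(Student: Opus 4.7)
The plan is to produce an explicit simultaneous strong metric generator of the claimed size. Let $W\subseteq V$ be a set realising $\rho({\cal G})$, let $w\in W$ be the common leaf promised by the definition, and set $k=\rho({\cal G})=|W|-1$. I would consider
\[
S \;=\; (V\setminus W)\cup\{w\} \;=\; V\setminus(W\setminus\{w\}),
\]
so that $|S|=|V|-k=|V|-\rho({\cal G})$, and prove that $S$ is a strong metric generator for every $G\in{\cal G}$, which gives the bound immediately.

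Fix $G\in{\cal G}$ and order the path $\langle W\rangle_G$ as $w=u_0,u_1,\ldots,u_k$. Any pair of vertices of $G$ having at least one endpoint in $S$ is already strongly resolved by that endpoint, since trivially $x\in I_G[y,x]$. Hence only pairs $\{u_i,u_j\}$ with $1\le i<j\le k$ need attention, and the natural candidate resolver is $w=u_0\in S$. The key step is to use that the path $u_0u_1\cdots u_k$ is an isometric path of $G$ (this is the content of the path condition in the definition of $\rho$, and is exactly what makes the present theorem extend Theorem~\ref{bound_n_min_diam_G} via a diametral geodesic of $G$). This gives $d_G(u_0,u_\ell)=\ell$ for every $\ell$ and $d_G(u_i,u_j)=j-i$, whence
\[
d_G(w,u_j)\;=\;j\;=\;i+(j-i)\;=\;d_G(w,u_i)+d_G(u_i,u_j),
\]
so $u_i\in I_G[w,u_j]$ and $w$ strongly resolves $u_i$ and $u_j$.

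The only delicate point is recognising that the path condition on $W$ is strong enough to guarantee that distances along the path coincide with distances in $G$; once that is in hand, the rest is simply the elementary observation that a leaf of an isometric path strongly resolves every pair of vertices of the path. With these two ingredients and the trivial case of pairs intersecting $S$, one concludes $\Sd_s({\cal G})\le|S|=|V|-\rho({\cal G})$.
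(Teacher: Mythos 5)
Your construction coincides with the paper's: the set $S=V\setminus(W\setminus\{w\})$ is exactly the set $V-W'$, $W'=W-\{v_0\}$, used there, and the bound follows once $S$ is shown to be a simultaneous strong metric generator. Where you genuinely differ is the verification. The paper argues that no two vertices of $W'$ are mutually maximally distant in any $G\in{\cal G}$, and then relies on the vertex-cover machinery (a set meeting every mutually maximally distant pair is a vertex cover of $G_{SR}$ and hence a strong metric generator, in the spirit of Theorem \ref{lem_oellerman}); you instead exhibit an explicit resolver, checking directly from the definition that the common leaf $w$ strongly resolves every pair $u_i,u_j$ inside the path, with pairs meeting $S$ resolved trivially. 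Your route is more self-contained and makes transparent why the result extends Theorem \ref{bound_n_min_diam_G}; the paper's is shorter given the tools already in place. Both verifications hinge on the same key fact: that distances along the path agree with distances in $G$.

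One caveat on that key fact, which you flag but then attribute to the definition: as written, the definition of $\rho({\cal G})$ only requires $\langle W\rangle_G$ to be an \emph{induced} path, and an induced path need not be isometric. For instance, four consecutive vertices of $C_5$ induce a $P_4$ that is not isometric, and there the leaf $u_0$ does not strongly resolve the pair $u_1,u_3$; so your assertion that isometry ``is the content of the path condition'' is not literally warranted by the stated definition. This is not a defect relative to the paper, however: the paper's own claim that no pair of vertices of $W'$ is mutually maximally distant fails for the same induced $P_4$ in $C_5$ (its two vertices at distance two are mutually maximally distant), and the remark following the theorem that $\rho({\cal G})$ is computable via Dijkstra's algorithm shows the intended meaning is that $W$ induces a shortest (geodesic) path in every $G\in{\cal G}$. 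Under that intended reading your argument is complete and correct, and your explicit isometry computation supplies precisely the step the paper leaves implicit.
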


\begin{proof}
Let $W=\{v_0,v_1, \ldots, v_{\rho({\cal G})}\}\subseteq V$ be a  set for which $\rho({\cal G})$ is obtained. Assume, without loss of generality, that $v_0$ is a common leaf of $\langle W \rangle_G$, for every $G \in {\cal G}$, and let $W'=W-\{v_0\}$. Since no pair of vertices $u,v \in W'$ are mutually maximally distant in any $G \in {\cal G}$,  the set $S=V-W'$ is a simultaneous strong metric generator for ${\cal G}$. Thus, $\Sd_s({\cal G}) \le |S|=|V|-\rho({\cal G})$. 
\end{proof}

The inequality above is sharp. A family of graphs ${\cal G}$ composed by paths having a common leaf is a trivial example where the inequality is reached. In this case, $\rho({\cal G})=|V|-1$, so that $\Sd_s({\cal G})=1=|V|-\rho({\cal G})$. This is not the only circumstance where this occurs. For instance, consider a graph family ${\cal G}$ constructed as follows. Consider   a star graph  $K_{1,r}$  of center $u$ and a complete graph  $K_{r+1}$    defined on a common vertex set $V'$. 
Let $V''$ be a set such that $V'\cap V''=\emptyset$ and let $\{G_1',G_2',\ldots,G_k'\}$ be a family composed by paths defined on $V''$, having a common leaf, say $v$, and let ${\cal G}=\{G_1,H_1,G_2,H_2,\ldots,G_k,H_k\}$ be a graph family
such that every $G_i$ is constructed from $G_i'$ and $K_{1,r}$ by identifying $u$ and $v$, and every $H_i$ is constructed from $G_i'$ and $K_{r+1}$ by identifying $u$ and $v$. For every $w \in V'-\{u\}$, the set $W=V'' \cup \{w\}$ is a maximum cardinality set such that, 
for every graph in $ {\cal G}$, the subgraph induced by  $W$ is a path and there exists $w\in W$ which is a common leaf of all these paths, so that 
 $\rho({\cal G})=|V''|$. Furthermore, the set  $V'-\{u\}$ is a simultaneous strong metric basis of ${\cal G}$ and, as a result, $\Sd_s({\cal G})=r=|V|-\rho({\cal G})$.

In general, the bound shown in Theorem~\ref{bound_V_min_len_common_path} can be efficiently computed, as $\rho({\cal G})$ can be easily computed in $O(|{\cal G}||V|^3$) time using the original Dijkstra's algorithm, which may be accelerated by using special data structures, e.g. Fibonacci heaps \cite {FredmanTarjan1987}.

\smallskip
\section{The Simultaneous Strong Metric Dimension of $\{G,G^c\}$}\label{SectionComplement}

We first consider the following direct consequence of Theorem \ref{ThGeneralUpperBound}.

\begin{corollary}\label{CorollarySD=n-1}
Let $G$ be a graph  of order $n$. Then the following assertions are equivalent.
\begin{itemize}
\item $\Sd_s(G,G^c)=n-1$.
\item $D(G)=D(G^c)=2$.
\end{itemize}
\end{corollary}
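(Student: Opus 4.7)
The plan is to derive the corollary as a direct application of Theorem~\ref{ThGeneralUpperBound} to the family $\{G, G^c\}$. By that theorem, $\Sd_s(G, G^c) = n-1$ is equivalent to the statement that every pair $u, v \in V$ is mutually maximally distant (MMD) in at least one of $G$ or $G^c$. Since both $G$ and $G^c$ must be connected for $\Sd_s(G,G^c)$ to be well-defined, we automatically have $D(G) \ge 2$ and $D(G^c) \ge 2$, so the only alternative to $D(G)=D(G^c)=2$ is that one of the diameters is at least $3$.

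For the forward direction, I would assume $D(G) = D(G^c) = 2$ and fix an arbitrary pair $u, v \in V$. Exactly one of $uv \in E(G)$ and $uv \in E(G^c)$ holds. If $uv \in E(G)$, then $d_{G^c}(u,v) \ge 2$, and since $D(G^c) = 2$ we get $d_{G^c}(u,v) = 2 = D(G^c)$. Any two vertices at diametral distance in a graph are trivially MMD there (every neighbour of either endpoint is at distance at most the diameter), so $u,v$ are MMD in $G^c$. The case $uv \in E(G^c)$ is symmetric and gives $u,v$ MMD in $G$. Applying Theorem~\ref{ThGeneralUpperBound} finishes this direction.

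For the converse, I would argue by contraposition and, by symmetry in $G \leftrightarrow G^c$, assume $D(G) \ge 3$, aiming to exhibit a pair that is MMD in neither graph. The natural candidate comes from a shortest path $u_0 - u_1 - u_2 - u_3$ in $G$: I will show $u_0$ and $u_2$ fail to be MMD in both $G$ and $G^c$. In $G$, the vertex $u_3 \in N_G(u_2)$ satisfies $d_G(u_0,u_3) = 3 > 2 = d_G(u_0,u_2)$, so $u_2$ is not maximally distant from $u_0$. In $G^c$, we have $d_{G^c}(u_0, u_2) = 1$ because $u_0 u_2 \notin E(G)$; moreover $u_3 \in N_{G^c}(u_0)$ since $u_0 u_3 \notin E(G)$, while $u_2 u_3 \in E(G)$ forces $d_{G^c}(u_2, u_3) \ge 2 > 1$, so $u_0$ is not maximally distant from $u_2$ in $G^c$. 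Hence $\{u_0, u_2\}$ violates the MMD condition of Theorem~\ref{ThGeneralUpperBound}, giving $\Sd_s(G, G^c) \le n-2$.

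The argument is essentially bookkeeping once one selects the right witness pair; the only mild subtlety I anticipate is remembering that the MMD requirement is a conjunction (both vertices must be maximally distant from each other), so a single neighbour violating the inequality in either direction suffices to rule out MMD status. No deeper obstacle is expected, which is consistent with the corollary being advertised as a direct consequence of Theorem~\ref{ThGeneralUpperBound}.
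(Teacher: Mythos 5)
Your proof is correct and follows essentially the same route as the paper: both directions go through Theorem~\ref{ThGeneralUpperBound}, with diametral pairs being mutually maximally distant when $D(G)=D(G^c)=2$, and with a distance-two pair on a length-three geodesic failing to be mutually maximally distant in both graphs when $D(G)\ge 3$. Your explicit witness $u_3$ in the $G^c$ case is just an unpacking of the paper's observation that the pair is adjacent in $G^c$ but not twins.
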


\begin{proof}
Let $x,y\in V(G)$. If $D(G)=D(G^c)=2$, then either $x$ and $y$  are diametral in $G$ or they are diametral in $G^c$.  
Hence,  by Theorem \ref{ThGeneralUpperBound} we obtain $\Sd_s(G,G^c)=n-1$.

Now, assume that $D(G)\ge 3$. If $x,u,v,y$ is a shortest path from $x$ to $y$ in $G$, then $x$ and $v$ are not mutually maximally distant in $G$ and, since they are adjacent in $G^c$ and they are not  twins,  they are not mutually maximally distant in $G^c$. Thus, by Theorem \ref{ThGeneralUpperBound} we deduce that  $\Sd_s(G,G^c)\le n-2$.
\end{proof}

The Petersen graph is an example of graphs where $\Sd_s(G,G^c)=n-1$ and the graphs shown in Figure \ref{FigStrongResolCoverNumber} are examples of graphs where $\Sd_s(G,G^c)=n-2$.

From Theorem \ref{ThTwinsFreeClique} and Corollary \ref{CorollarySD=n-1} we derive the next result. 
\begin{theorem}
For any graph $G$ of order $n$ and $D(G)=2$ such that $G^c$ is connected,
$$\Sd_s(G,G^c)\ge n- \varpi(G).$$
Moreover, if $D(G^c)\ge 3$ and $\varpi(G)=2$, then  $$\Sd_s(G,G^c)=n-2.$$
\end{theorem}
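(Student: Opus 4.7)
The plan is to combine two facts already available in the excerpt: the general lower bound $\Sd_s(\mathcal{G})\ge \max_i \dim_s(G_i)$ for simultaneous strong metric dimension, and the characterization in Corollary \ref{CorollarySD=n-1} of when $\Sd_s(G,G^c)=n-1$.

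First I would handle the lower bound. Since $G^c$ is connected, the pair $\{G,G^c\}$ is a legitimate family of connected graphs on the common vertex set $V(G)$, and the observation stated in Section \ref{SectionBounds} gives
\[
\Sd_s(G,G^c)\ \ge\ \max\{\dim_s(G),\dim_s(G^c)\}\ \ge\ \dim_s(G).
\]
Now, since $D(G)=2$, Theorem \ref{ThTwinsFreeClique} yields the exact equality $\dim_s(G)=n-\varpi(G)$, and hence $\Sd_s(G,G^c)\ge n-\varpi(G)$, establishing the first inequality.

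Next I would derive the equality under the additional hypotheses $D(G^c)\ge 3$ and $\varpi(G)=2$. The lower bound just obtained specializes to $\Sd_s(G,G^c)\ge n-2$, so only the matching upper bound $\Sd_s(G,G^c)\le n-2$ remains. For this, I would invoke Corollary \ref{CorollarySD=n-1}, which says $\Sd_s(G,G^c)=n-1$ if and only if both $G$ and $G^c$ have diameter two. Since $D(G^c)\ge 3$, the equivalence fails, so $\Sd_s(G,G^c)\ne n-1$, and combined with the trivial upper bound $\Sd_s(G,G^c)\le n-1$ we conclude $\Sd_s(G,G^c)\le n-2$. Together with the lower bound this forces $\Sd_s(G,G^c)=n-2$.

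There is essentially no technical obstacle here: the proof is a short assembly of Theorem \ref{ThTwinsFreeClique}, the elementary monotonicity observation, and Corollary \ref{CorollarySD=n-1}. The only subtlety worth explicitly verifying is that the family $\{G,G^c\}$ is admissible for the simultaneous framework, which is guaranteed by the hypothesis that $G^c$ is connected (connectedness of $G$ follows from $D(G)=2$). Note also that Corollary \ref{CorollarySD=n-1} is symmetric in $G$ and $G^c$, so the assumption $D(G^c)\ge 3$ (rather than $D(G)\ge 3$) is exactly what is needed to rule out the extremal case.
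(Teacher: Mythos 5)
Your proof is correct and matches the paper's approach exactly: the paper states this theorem as a direct consequence of Theorem \ref{ThTwinsFreeClique} (which gives $\dim_s(G)=n-\varpi(G)$ when $D(G)=2$), the monotonicity bound $\Sd_s(G,G^c)\ge\dim_s(G)$, and Corollary \ref{CorollarySD=n-1} to exclude the value $n-1$ when $D(G^c)\ge 3$.
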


Given a graph $G=(V,E)$, we say that a set $S\subset V$ is a \textit{strong resolving cover} for $G$ if $S$ is a vertex cover and a strong metric generator for $G$.

\begin{theorem}\label{ThCover-Generator}
If $G$ is a  connected graph   such that $G^c$ is connected, then any  strong resolving cover of $G$ is a simultaneous strong metric generator for $\{G,G^c\}$.
\end{theorem}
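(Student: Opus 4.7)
The plan is to apply Theorem~\ref{lem_oellerman} to $G^c$: the set $S$ will be a strong metric generator for $G^c$ precisely when it is a vertex cover of $(G^c)_{SR}$. Since by hypothesis $S$ is simultaneously a vertex cover of $G$ and a vertex cover of $G_{SR}$, it suffices to show that every edge of $(G^c)_{SR}$ is either an edge of $G$ or an edge of $G_{SR}$, for then $S$ will cover every such edge.

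To carry this out, pick an edge $uv$ of $(G^c)_{SR}$, meaning that $u$ and $v$ are mutually maximally distant in $G^c$. If $uv \in E(G)$ there is nothing to show, so assume $uv \in E(G^c)$. Then $d_{G^c}(u,v)=1$, and exploiting the ``maximally distant'' condition, every neighbor $w$ of $u$ in $G^c$ must satisfy $d_{G^c}(w,v)\leq 1$, forcing $N_{G^c}(u)\subseteq N_{G^c}[v]$; symmetrically $N_{G^c}(v)\subseteq N_{G^c}[u]$. Combined with $uv \in E(G^c)$, this yields $N_{G^c}[u]=N_{G^c}[v]$, that is, $u$ and $v$ are true twins in $G^c$.

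The next step is to translate this into $G$. A direct verification shows that, for distinct $u,v$ with $uv \in E(G^c)$, the condition $N_{G^c}[u]=N_{G^c}[v]$ is equivalent to $N_G(u)=N_G(v)$; hence $u$ and $v$ are false twins in $G$. Since both $G$ and $G^c$ are connected, one has $|V(G)|\geq 3$, so $u$ cannot be isolated in $G$; therefore $u$ and $v$ share a common neighbor and $d_G(u,v)=2$. Every $w\in N_G(u)=N_G(v)$ then satisfies $d_G(w,v)=1\leq d_G(u,v)$, which shows that $u$ is maximally distant from $v$ in $G$, and by symmetry $v$ is maximally distant from $u$. Hence $uv \in E(G_{SR})$, as desired.

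The main obstacle I anticipate is the twin translation: first deducing that mutual maximal distance at distance $1$ in $G^c$ forces $u$ and $v$ to be true twins in $G^c$, and then converting this into the false-twin condition in $G$ without index errors. Once this correspondence is pinned down, verifying mutual maximal distance in $G$ reduces to the short computation above, which uses only that false twins at distance $2$ in $G$ have all their neighbors adjacent to the opposite vertex.
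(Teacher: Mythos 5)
Your proof is correct and takes essentially the same route as the paper's: both split the mutually maximally distant pairs of $G^c$ into those adjacent in $G$ (covered because $S$ is a vertex cover of $G$) and those adjacent in $G^c$, which are shown to be true twins in $G^c$, hence false twins in $G$ and therefore mutually maximally distant in $G$ (covered because $S$ is a strong metric generator for $G$). Your write-up merely makes explicit the twin translation and the edge inclusion $E\bigl((G^c)_{SR}\bigr)\subseteq E(G)\cup E(G_{SR})$ that the paper states more tersely.
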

\begin{proof}
Let $W$ be a strong resolving cover of $G$. We shall show that $W$ is a strong metric generator for $G^c$.   We differentiate two cases for any pair $x,y$ of mutually maximally distant vertices in $G^c$.

\vspace{0,2cm}
\noindent Case 1.  $x$ and $y$ are adjacent in $G^c$. 
In this case, $x$ and $y$ are false twins in $G$ (true twins in $G^c$) and so they are mutually maximally distant in $G$. Since $W$ is a strong metric generator for $G$, we conclude that $x\in W$ or $y\in W$.

\vspace{0,2cm}
\noindent Case 2.    $x$ and $y$ are not adjacent in $G^c$. In this case $x$ and $y$ are adjacent in $G$ and, since $W$ is a vertex cover of $G$, we have that $x\in W$ or $y\in W$.

According to the two cases above, $W$ is a vertex cover of $(G^c)_{SR}$ and, as a consequence, $W$ is a strong metric generator for $G^c$.  Therefore, $W$ is a simultaneous strong metric generator for $\{G,G^c\}$.
\end{proof}
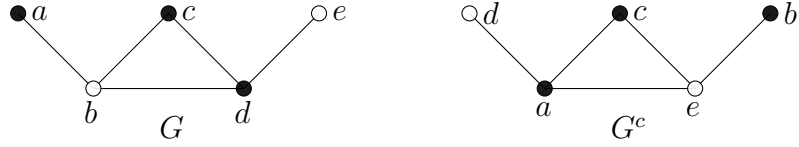
\begin{figure}[h]
\centering
\begin{tikzpicture}[transform shape, inner sep = .7mm]
\draw (-2,1)--(-1,0)--(1,0)--(2,1);
\draw (-1,0)--(0,1)--(1,0);
\draw (4,1)--(5,0)--(7,0)--(8,1);
\draw (5,0)--(6,1)--(7,0);

\filldraw[fill opacity=0.9,fill=black]  (-2,1) circle (0.1cm)  ;
\filldraw[fill opacity=0.9,fill=white]  (-1,0) circle (0.1cm);
\filldraw[fill opacity=0.9,fill=black]  (0,1) circle (0.1cm);
\filldraw[fill opacity=0.9,fill=black]  (1,0) circle (0.1cm);
\filldraw[fill opacity=0.9,fill=white]  (2,1) circle (0.1cm);
\filldraw[fill opacity=0.9,fill=white]  (4,1) circle (0.1cm);
\filldraw[fill opacity=0.9,fill=black]  (5,0) circle (0.1cm);
\filldraw[fill opacity=0.9,fill=black]  (6,1) circle (0.1cm);
\filldraw[fill opacity=0.9,fill=white]  (7,0) circle (0.1cm);
\filldraw[fill opacity=0.9,fill=black]  (8,1) circle (0.1cm);

\coordinate [label=right:{$a$}] (a) at (-1.9,1);
\coordinate [label=right:{$b$}] (b) at (-1.2,-0.31);
\coordinate [label=right:{$c$}] (c) at (0.1,1);
\coordinate [label=right:{$d$}] (d) at(0.8,-0.31);\coordinate [label=right:{$e$}] (e) at (2.1,1);
\coordinate [label=right:{$G$}] (G) at(-0.2,-0.52);
\coordinate [label=right:{$G^c$}] (G) at(5.8,-0.52);

\coordinate [label=right:{$d$}] (d1) at (4.1,1);
\coordinate [label=right:{$a$}] (a1) at (4.8,-0.28);
\coordinate [label=right:{$c$}] (c1) at (6.1,1);
\coordinate [label=right:{$e$}] (e1) at(6.8,-0.28);\coordinate [label=right:{$b$}] (b1) at (8.1,1);
\end{tikzpicture}
\caption{$X_1=\{a,c,d\}$ is a strong resolving cover for $G$ and $X_2=\{a,c,b\}$ is a strong resolving cover for $G^c$. Both $X_1$ and $X_2$ are simultaneous strong metric bases of $\{G,G^c\}$.  }
\label{FigStrongResolCoverNumber}
\end{figure}

 The \textit{strong resolving cover number}, denoted by $\beta_s(G)$, is the minimum cardinality among all the strong resolving covers for $G$. Obviously, for any connected graph of order $n$, 
 \begin{equation} \label{max-cover-strongdim}
 n-1\ge \beta_s(G)\ge \max\{\dim_s(G),\beta(G)\}.
 \end{equation} 

\begin{corollary}
For any   connected graph  $G$ such that $G^c$ is connected,
$$\Sd_s(G,G^c)\le \min\{\beta_s(G),\beta_s(G^c)\}.$$
\end{corollary}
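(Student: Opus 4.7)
The corollary follows directly from Theorem~\ref{ThCover-Generator} together with a symmetry argument on the family $\{G,G^c\}$. My plan is to apply Theorem~\ref{ThCover-Generator} twice, once to $G$ and once to $G^c$, and then take the minimum of the resulting bounds.

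First I would let $W_1$ be a minimum strong resolving cover of $G$, so $|W_1|=\beta_s(G)$. By Theorem~\ref{ThCover-Generator}, $W_1$ is a simultaneous strong metric generator for $\{G,G^c\}$, so $\Sd_s(G,G^c)\le |W_1|=\beta_s(G)$. Second, I would observe that the hypotheses are symmetric in $G$ and $G^c$: both graphs are connected and $(G^c)^c=G$. Therefore I can apply Theorem~\ref{ThCover-Generator} to $G^c$ in place of $G$: if $W_2$ is a minimum strong resolving cover of $G^c$, then $W_2$ is a simultaneous strong metric generator for $\{G^c,(G^c)^c\}=\{G,G^c\}$, and so $\Sd_s(G,G^c)\le |W_2|=\beta_s(G^c)$. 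Combining both inequalities yields $\Sd_s(G,G^c)\le \min\{\beta_s(G),\beta_s(G^c)\}$.

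There is no substantial obstacle here, since Theorem~\ref{ThCover-Generator} already does all the work; the only point worth verifying is that the family $\{G,G^c\}$ is unordered, so that applying the theorem to $G^c$ legitimately bounds $\Sd_s$ of the same family. This is immediate from the definition of $\Sd_s$, which depends only on the set of graphs in the family and not on any ordering.
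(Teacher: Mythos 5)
Your proof is correct and follows exactly the route the paper intends: the corollary is an immediate consequence of Theorem~\ref{ThCover-Generator} applied to $G$ and, by the symmetry of the hypotheses (both $G$ and $G^c$ connected, $(G^c)^c=G$), to $G^c$ as well. Taking the minimum of the two bounds gives the stated inequality, just as in the paper.
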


Figure \ref{FigStrongResolCoverNumber} shows a graph $G$ and its complement $G^c$. In this case, $\Sd_s(G,G^c)=\beta_s(G)=\beta_s(G^c)=3>2=\dim_s(G)=\dim_s(G^c)=\beta(G)=\beta(G^c)$.
The graph $G$ shown in Figure \ref{figDimk} satisfies that $\dim_s(G^c)=2<3=\beta_s(G^c)=\Sd_s(G,G^c)=\dim_s(G)<4=\beta_s(G)$. In this case, $\{2,4\}$ is a strong metric basis of $G^c$,  $\{2,3,4\}$ is a $\beta_s(G^c)$-set which is a simultaneous strong metric basis of $\{G, G^c\}$ and, at the same time, it is a strong metric basis of $G$, while $\{2,4,5,6\}$ is a $\beta_s(G)$-set.

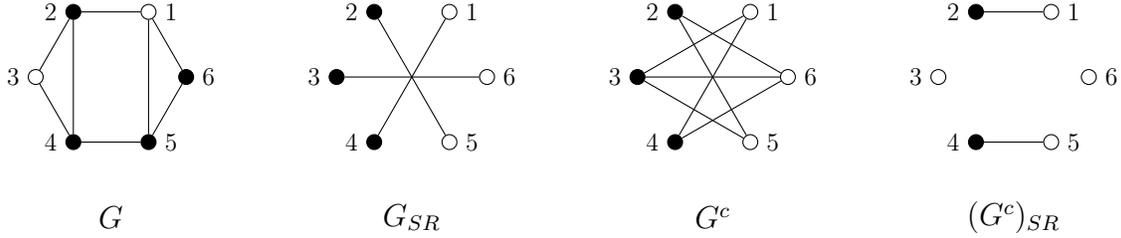
\begin{figure}[h]
\centering
\begin{tikzpicture}[transform shape, inner sep = .7mm]
\def\radius{1} 
\foreach \ind in {1,...,6}
{
\pgfmathparse{360/6*\ind};

\ifthenelse{\ind=2 \OR\ind=4 \OR \ind=5 \OR \ind=6}
{
\node [draw=black, shape=circle, fill=black] (\ind) at (\pgfmathresult:\radius cm) {};
}
{
\node [draw=black, shape=circle, fill=white] (\ind) at (\pgfmathresult:\radius cm) {};
};

\ifthenelse{\ind=2 \OR \ind=3 \OR \ind=4}
{
\node [scale=.8] at ([xshift=-.3 cm]\ind) {$\ind$};
}
{
\node [scale=.8] at ([xshift=.3 cm]\ind) {$\ind$};
};
}

\foreach \ind in {2,5,6}
{
\draw[black] (1) -- (\ind);
}
\foreach \ind in {2,3,5}
{
\draw[black] (4) -- (\ind);
}
\draw[black] (2) -- (3);
\draw[black] (5) -- (6);
\node at ([shift=({\radius/2,-1})]4) {$G$};

\pgfmathsetmacro{\traslation}{(2*\radius+2};
\foreach \ind in {1,...,6}
{
\pgfmathparse{360/6*\ind};
\ifthenelse{\ind=2 \OR \ind=3 \OR \ind=4}
{
\node [draw=black, shape=circle, fill=black, xshift=\traslation cm] (s\ind) at (\pgfmathresult:\radius cm) {};
}
{
\node [draw=black, shape=circle, fill=white, xshift=\traslation cm] (s\ind) at (\pgfmathresult:\radius cm) {};
};

\ifthenelse{\ind=2 \OR \ind=3 \OR \ind=4}
{
\node [scale=.8] at ([xshift=-.3 cm]s\ind) {$\ind$};
}
{
\node [scale=.8] at ([xshift=.3 cm]s\ind) {$\ind$};
};
}

\foreach \ind in {1,...,3}
{
\pgfmathparse{int(\ind+3)};
\draw[black] (s\ind) -- (s\pgfmathresult);
}
\node at ([shift=({\radius/2,-1})]s4) {$G_{SR}$};

\pgfmathsetmacro{\traslation}{(4*\radius+4};
\foreach \ind in {1,...,6}
{
\pgfmathparse{360/6*\ind};
\ifthenelse{\ind=2 \OR \ind=3 \OR \ind=4}
{
\node [draw=black, shape=circle, fill=black, xshift=\traslation cm] (c\ind) at (\pgfmathresult:\radius cm) {};
}
{
\node [draw=black, shape=circle, fill=white, xshift=\traslation cm] (c\ind) at (\pgfmathresult:\radius cm) {};
};
\ifthenelse{\ind=2 \OR \ind=3 \OR \ind=4}
{
\node [scale=.8] at ([xshift=-.3 cm]c\ind) {$\ind$};
}
{
\node [scale=.8] at ([xshift=.3 cm]c\ind) {$\ind$};
};
}

\foreach \ind in {2,...,4}
{
\draw[black] (c6) -- (c\ind);
}
\foreach \ind in {3,4}
{
\draw[black] (c1) -- (c\ind);
}
\foreach \ind in {2,3}
{
\draw (c5) -- (c\ind);
}
\node at ([shift=({\radius/2,-1})]c4) {$G^c$};

\pgfmathsetmacro{\traslation}{(6*\radius+6};
\foreach \ind in {1,...,6}
{
\pgfmathparse{360/6*\ind};
\ifthenelse{\ind=2 \OR \ind=4}
{
\node [draw=black, shape=circle, fill=black, xshift=\traslation cm] (d\ind) at (\pgfmathresult:\radius cm) {};
}
{
\node [draw=black, shape=circle, fill=white, xshift=\traslation cm] (d\ind) at (\pgfmathresult:\radius cm) {};
};
\ifthenelse{\ind=2 \OR \ind=3 \OR \ind=4}
{
\node [scale=.8] at ([xshift=-.3 cm]d\ind) {$\ind$};
}
{
\node [scale=.8] at ([xshift=.3 cm]d\ind) {$\ind$};
};
}

\draw (d1) -- (d2);
\draw (d4) -- (d5);
\node at ([shift=({\radius/2,-1})]d4) {$(G^c)_{SR}$};
\end{tikzpicture}
\caption{The $\beta_s(G^c)$-set $\{2,3,4\}$ is a simultaneous strong metric basis of $\{G,G^c\}$.}
\label{figDimk}
\end{figure}

\begin{theorem}\label{GEqualGcSRtwoDirection}
Let $G$ be a connected graph such that $D(G^c)=2$ and let $S\subset V(G)$. Then  the following assertions are equivalent.
\begin{enumerate}[{\rm (a)}]
\item $S$ is a simultaneous strong metric generator for $\{G,G^c\}$.

\item $S$ is a strong resolving cover for $G$.
\end{enumerate}
\end{theorem}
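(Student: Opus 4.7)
The plan is to prove each implication separately, noting first that the hypothesis $D(G^c)=2$ guarantees $G^c$ is connected, which is the only extra ingredient needed to apply the machinery already developed.

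For the direction (b) $\Rightarrow$ (a), I would invoke Theorem \ref{ThCover-Generator} directly. If $S$ is a strong resolving cover for $G$ (meaning $S$ is simultaneously a vertex cover and a strong metric generator for $G$) and $G^c$ is connected, the theorem immediately yields that $S$ is a simultaneous strong metric generator for $\{G,G^c\}$. So this direction is essentially a citation, with the only remark being that $D(G^c)=2$ implies $G^c$ is connected.

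The main work is the direction (a) $\Rightarrow$ (b). Assume $S$ is a simultaneous strong metric generator for $\{G,G^c\}$. Then $S$ is by definition a strong metric generator for $G$, so it suffices to prove $S$ is a vertex cover of $G$. I would argue by contradiction: suppose there is an edge $uv \in E(G)$ with $u,v \notin S$. Since $uv\in E(G)$, the vertices $u,v$ are non-adjacent in $G^c$, so $d_{G^c}(u,v) \geq 2$. Combined with $D(G^c)=2$, this forces $d_{G^c}(u,v)=2=D(G^c)$.

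The key observation is that vertices realizing the diameter are automatically mutually maximally distant: for any $w \in N_{G^c}(u)$ we have $d_{G^c}(v,w) \leq D(G^c) = 2 = d_{G^c}(u,v)$, so $u$ is maximally distant from $v$ in $G^c$, and by symmetry the pair is mutually maximally distant. Consequently $uv \in E((G^c)_{SR})$. Since $S$ is a strong metric generator for $G^c$, Theorem \ref{lem_oellerman} implies $S$ is a vertex cover of $(G^c)_{SR}$, which contradicts $u,v \notin S$. Hence $S$ is a vertex cover of $G$, completing the proof.

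I do not anticipate any real obstacle; the only subtlety is recognizing that the diameter-two hypothesis on $G^c$ is exactly what is needed to convert a non-edge of $G^c$ into a mutually-maximally-distant pair, thereby forcing the vertex cover condition on $G$ to fall out of the strong metric generator condition on $G^c$.
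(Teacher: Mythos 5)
Your proof is correct and follows essentially the same route as the paper: the direction (b)~$\Rightarrow$~(a) is a direct appeal to Theorem~\ref{ThCover-Generator}, and (a)~$\Rightarrow$~(b) rests on the observation that an edge of $G$ is a diametral, hence mutually maximally distant, pair in $G^c$, so a strong metric generator for $G^c$ (equivalently, a vertex cover of $(G^c)_{SR}$, the set-level fact behind Theorem~\ref{lem_oellerman} that the paper also uses) must cover every edge of $G$. The only cosmetic difference is that the paper records the full description $(G^c)_{SR}=(V,E\cup E')$ with $E'$ the false-twin pairs of $G$, whereas you use only the containment $E(G)\subseteq E\bigl((G^c)_{SR}\bigr)$, which is all that is needed.
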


\begin{proof}
Let $G=(V,E)$. Since $D(G^c)=2$, two vertices $x,y\in V$ are mutually maximally distant in $G^c$ if and only if $d_{G^c}(x,y)=2$ or $N_{G^c}[x]=N_{G^c}[y]$. Hence, $(G^c)_{SR}=(V,E\cup E')$, where $E'=\{\{x,y\}:\; N_G(x)=N_G(y) \}.$

Let $S$ be a simultaneous strong metric generator for $\{G,G^c\}$.
 Since $S$ is a strong metric generator for $G^c$, we deduce that $S$ is a vertex cover of $(G^c)_{SR}=(V,E\cup E')$, and as a consequence, for any edge  $\{x,y\}\in E$, we have that  $x\in S$ or $y\in S$.  Hence, $S$ is a  strong metric generator for $G$ and a vertex cover of $G$. By Theorem \ref{ThCover-Generator} we conclude the proof.
\end{proof}

From Theorem \ref{GEqualGcSRtwoDirection} we deduce the following result.

\begin{corollary}\label{Dim=beta-s}
For any connected graph $G$ such that $D(G^c)=2$,
$$\Sd_s(G,G^c)=\beta_s(G).$$
\end{corollary}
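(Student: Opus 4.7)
The plan is to observe that this corollary is essentially an immediate consequence of Theorem \ref{GEqualGcSRtwoDirection}, which establishes the equivalence between simultaneous strong metric generators for $\{G,G^c\}$ and strong resolving covers for $G$ under the hypothesis $D(G^c)=2$. First I would note that the hypothesis $D(G^c)=2$ guarantees that $G^c$ is connected, so the family $\{G,G^c\}$ is a legitimate family of connected graphs on a common vertex set and $\Sd_s(G,G^c)$ is well-defined.

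Next, I would argue that the two parameters coincide simply by taking minimum cardinalities over identical collections of subsets. More precisely, if $S^*$ is a simultaneous strong metric basis of $\{G,G^c\}$, then by the $(\mathrm{a})\Rightarrow (\mathrm{b})$ direction of Theorem \ref{GEqualGcSRtwoDirection}, $S^*$ is a strong resolving cover for $G$, whence $\beta_s(G)\le |S^*|=\Sd_s(G,G^c)$. Conversely, if $T^*$ is a minimum cardinality strong resolving cover of $G$, then by the $(\mathrm{b})\Rightarrow (\mathrm{a})$ direction of the same theorem, $T^*$ is a simultaneous strong metric generator for $\{G,G^c\}$, giving $\Sd_s(G,G^c)\le |T^*|=\beta_s(G)$. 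Combining the two inequalities yields the claimed equality.

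Since this is a direct corollary, there is no real obstacle; the work has already been done in Theorem \ref{GEqualGcSRtwoDirection}. The only minor point worth flagging explicitly in the write-up is the connectedness of $G^c$ (immediate from $D(G^c)=2$), ensuring that the previous theorem applies and that the simultaneous strong metric dimension is defined in the sense used throughout the paper.
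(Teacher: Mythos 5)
Your proposal is correct and follows exactly the paper's route: the paper presents this corollary as an immediate consequence of Theorem \ref{GEqualGcSRtwoDirection}, since the two parameters are minima over the same collection of vertex sets. Spelling out both directions of the equivalence (and noting that $D(G^c)=2$ ensures $G^c$ is connected) is a faithful, slightly more explicit version of the same argument.
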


In order to present the next result, we need to introduce some new notation and terminology. 
Given a graph $G$ such that $V(G)\ne \partial(G)$, we
  define the \textit{interior subgraph} of $G$ as the subgraph $\mathring{G}$ induced by  $V(G)-\partial (G)$.
 The parameter $\mathring{\beta}(G)$  is defined as follows.
 
$$\mathring{\beta}(G)=\left\{\begin{array}{ll}
0& \text{if } V(G)= \partial(G)\\
&\\
\beta( \mathring{G} )&\text{otherwise.}
\end{array}
\right.$$
\begin{corollary}\label{CorollaryInequalityMax}
For any     connected graph  $G$ such that $D(G^c)=2$,  $$\Sd_s(G,G^c)\ge \max\{\dim_s(G)+\mathring{\beta}(G),\beta(G)\}.$$
\end{corollary}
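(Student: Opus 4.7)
The plan is to leverage Corollary \ref{Dim=beta-s}, which under the hypothesis $D(G^c)=2$ gives $\Sd_s(G,G^c)=\beta_s(G)$. Thus it suffices to prove that
\[
\beta_s(G)\geq \max\{\dim_s(G)+\mathring{\beta}(G),\beta(G)\}.
\]
The bound $\beta_s(G)\geq \beta(G)$ is immediate from the definition of a strong resolving cover (every such set is, in particular, a vertex cover of $G$). The work is concentrated in showing $\beta_s(G)\geq \dim_s(G)+\mathring{\beta}(G)$, which I would obtain by splitting an optimal strong resolving cover according to the boundary/interior partition of $V(G)$.

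Let $S$ be a strong resolving cover of $G$ with $|S|=\beta_s(G)$. Write $S=S_1\cup S_2$ as a disjoint union, where $S_1=S\cap \partial(G)$ and $S_2=S\cap V(\mathring{G})$. First I would show that $S_1$ is a vertex cover of $G_{SR}$. The key observation here is the standard fact that every edge of $G_{SR}$ has both endpoints in $\partial(G)$, because non-boundary vertices are isolated in $G_{SR}$. Since $S$ is a strong metric generator for $G$, by Theorem \ref{lem_oellerman} it is a vertex cover of $G_{SR}$, and as only vertices of $\partial(G)$ may cover edges of $G_{SR}$, the restriction $S_1$ is still a vertex cover of $G_{SR}$. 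Hence $|S_1|\geq \beta(G_{SR})=\dim_s(G)$.

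Next I would argue that $S_2$ is a vertex cover of the interior subgraph $\mathring{G}$. Indeed, any edge of $\mathring{G}$ is an edge of $G$ whose two endpoints lie in $V(\mathring{G})=V(G)-\partial(G)$. Since $S$ is a vertex cover of $G$, at least one endpoint of such an edge belongs to $S$, and that endpoint must lie in $V(\mathring{G})\cap S=S_2$. Consequently $|S_2|\geq \beta(\mathring{G})=\mathring{\beta}(G)$ when $V(G)\neq \partial(G)$, and trivially $|S_2|\geq 0=\mathring{\beta}(G)$ otherwise. Adding the two estimates gives
\[
\beta_s(G)=|S|=|S_1|+|S_2|\geq \dim_s(G)+\mathring{\beta}(G),
\]
which combined with $\beta_s(G)\geq \beta(G)$ yields the desired inequality.

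The main conceptual step, and the one requiring care, is the separation argument: one has to be sure that restricting $S$ to $\partial(G)$ preserves the vertex-cover property with respect to $G_{SR}$, and that restricting $S$ to $V(\mathring{G})$ preserves the vertex-cover property with respect to $\mathring{G}$. Both rely on the fact that the relevant edge sets are confined to the corresponding vertex subsets, which is why the partition $\partial(G)\cup V(\mathring{G})$ behaves well. Once these two containments are established, the rest of the proof is purely arithmetic.
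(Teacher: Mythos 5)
Your proof is correct and follows essentially the same route as the paper: after invoking the equivalence between simultaneous strong metric generators for $\{G,G^c\}$ and strong resolving covers of $G$ (Theorem~\ref{GEqualGcSRtwoDirection}, equivalently Corollary~\ref{Dim=beta-s}), the paper likewise splits an optimal set into its intersection with $\partial(G)$ and with $V(G)-\partial(G)$, bounding the two parts by $\dim_s(G)$ and $\beta(\mathring{G})$ respectively. Your write-up merely makes explicit the fact, stated as ``clearly'' in the paper, that all edges of $G_{SR}$ lie inside $\partial(G)$.
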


\begin{proof}
By Theorem \ref{GEqualGcSRtwoDirection} and Eq.(\ref{max-cover-strongdim}) we have that $\Sd_s(G,G^c)\ge\beta(G)$. It only remains to prove that $\Sd_s(G,G^c)\ge\dim_s(G)+\mathring{\beta}(G)$. If $V(G)= \partial(G)$, then $\mathring{\beta}(G)=0$, and by Theorem \ref{GEqualGcSRtwoDirection} and Eq.(\ref{max-cover-strongdim})  we have  $\Sd_s(G,G^c)\ge\dim_s(G)=\dim_s(G)+\mathring{\beta}(G)$. Assume that $V(G)\ne  \partial(G)$. Let $B$ be a simultaneous strong metric basis of $\{G,G^c\}$, and let $B_1=B\cap \partial(G)$ and $B_2=B-B_1$. Clearly, $|B_1|\ge \dim_s(G)$. Moreover, since no vertex of $B_1$ covers edges of 
$\mathring{G}$, by 
Theorem \ref{GEqualGcSRtwoDirection}  we conclude that $B_2$ is a vertex cover of $\mathring{G}$, so that $|B_2|\ge\beta(\mathring{G})$. Therefore, $\Sd_s(G,G^c)=|B|=|B_1|+|B_2|\ge\dim_s(G)+\mathring{\beta}(G)$. 
\end{proof}

To illustrate this result we take the graph $G$ shown in Figure
\ref{figSdBeta}. In this case  $\Sd_s(G,G^c)=\beta(G)=5>4=\dim_s(G)+\mathring{\beta}(G)$.
In contrast, the equality $\Sd_s(G,G^c)=\dim_s(G)+\mathring{\beta}(G)$ is satisfied for any graph constructed as follows. Let $r,s\ge 2$ and $t\ge 3$ be three integers and let $G$ be the graph constructed from $K_r,K_s$ and $P_t$ by identifying one vertex of $K_r$ with one leaf of $P_t$ and one vertex of $K_s$ with the other leaf of $P_t$. In this case  
$\Sd_s(G,G^c)=r+s+\lfloor \frac{t}{2}\rfloor-1$, $\dim_s(G)=r+s-1$, $\beta(G)=r+s+\lfloor \frac{t}{2}\rfloor-2$
 and $\mathring{\beta}(G)=\beta(\mathring{G})=\lfloor \frac{t}{2}\rfloor.$ Hence,  $\Sd_s(G,G^c)=\dim_s(G)+\mathring{\beta}(G)>\beta(G)$.

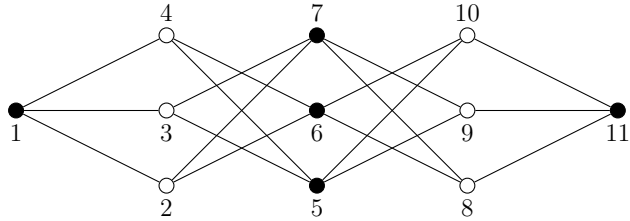
\begin{figure}[!ht]
\centering
\begin{tikzpicture}[transform shape, inner sep = .7mm]
\def\spacey{1};
\def\spacex{2};
\foreach \x/\c in {1/0,3/1,6/2,9/3,11/4}
{
\ifthenelse{\x=3 \OR \x=9}
{
\node [draw=black, shape=circle, fill=white] (\x) at (\c*\spacex cm, 0) {};
}
{
\node [draw=black, shape=circle, fill=black] (\x) at (\c*\spacex cm, 0) {};
};
\node [scale=.8] at ([yshift=-.3 cm]\x) {$\x$};
}
\foreach \x/\c in {4/1,7/2,10/3}
{
\ifthenelse{\x=7}
{
\node [draw=black, shape=circle, fill=black] (\x) at (\c*\spacex cm, \spacey cm) {};
}
{
\node [draw=black, shape=circle, fill=white] (\x) at (\c*\spacex cm, \spacey cm) {};
};
\node [scale=.8] at ([yshift=.3 cm]\x) {$\x$};
}
\foreach \x/\c in {2/1,5/2,8/3}
{
\ifthenelse{\x=5}
{
\node [draw=black, shape=circle, fill=black] (\x) at (\c*\spacex cm, -\spacey cm) {};
}
{
\node [draw=black, shape=circle, fill=white] (\x) at (\c*\spacex cm, -\spacey cm) {};
};
\node [scale=.8] at ([yshift=-.3 cm]\x) {$\x$};
}
\foreach \x in {2,3,4}
{
\draw (1) -- (\x);
}
\foreach \x in {6,7}
{
\draw (2) -- (\x);
}
\foreach \x in {5,7}
{
\draw (3) -- (\x);
}
\foreach \x in {5,6}
{
\draw (4) -- (\x);
}
\foreach \x in {9,10}
{
\draw (5) -- (\x);
}
\foreach \x in {8,10}
{
\draw (6) -- (\x);
}
\foreach \x in {8,9}
{
\draw (7) -- (\x);
}
\foreach \x in {8,9,10}
{
\draw (11) -- (\x);
}
\end{tikzpicture}
\caption{The sets $\{1,5,6,7\}$ and $\{5,6,7,11\}$ are the only strong metric bases of $G$, while  $\{1,5,6,7,11\}$ is the only $\beta(G)$-set which is a strong metric generator of $G$.}
\label{figSdBeta}
\end{figure}

\begin{corollary}\label{CorollaryEqualityto-dim(G)}
Let $G$ be a connected graph  such that $D(G^c)=2$. Then the following assertions hold.

\begin{itemize}
\item $\Sd_s(G,G^c)=\dim_s(G)$ if and only if there exists a strong metric basis of $G$ which is a vertex cover of $G$. 
\item $\Sd_s(G,G^c)=\beta(G)$ if and only if  there exists a $\beta(G)$-set which is a strong metric generator of $G$. 
\end{itemize}
\end{corollary}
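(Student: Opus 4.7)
The plan is to use Corollary \ref{Dim=beta-s}, which gives $\Sd_s(G,G^c)=\beta_s(G)$ whenever $D(G^c)=2$, together with the chain of inequalities in (\ref{max-cover-strongdim}), namely $\beta_s(G)\geq\max\{\dim_s(G),\beta(G)\}$. Both equivalences then reduce to checking when $\beta_s(G)$ attains the value $\dim_s(G)$ (respectively $\beta(G)$), and this is a direct translation between a strong resolving cover of minimum size and the hypothesized set.

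For the first equivalence, I would proceed as follows. Assume a strong metric basis $B$ of $G$ that is also a vertex cover of $G$ exists. Then $B$ is, by definition, a strong resolving cover of $G$, so $\beta_s(G)\leq|B|=\dim_s(G)$. Combined with $\beta_s(G)\geq\dim_s(G)$ from (\ref{max-cover-strongdim}) and the identity $\Sd_s(G,G^c)=\beta_s(G)$ from Corollary \ref{Dim=beta-s}, this yields $\Sd_s(G,G^c)=\dim_s(G)$. Conversely, if $\Sd_s(G,G^c)=\dim_s(G)$, then a simultaneous strong metric basis $B$ of $\{G,G^c\}$ has $|B|=\dim_s(G)$, and by Theorem \ref{GEqualGcSRtwoDirection} it is a strong resolving cover of $G$, in particular a strong metric generator for $G$ of minimum cardinality (hence a strong metric basis) which is simultaneously a vertex cover.

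The second equivalence is analogous: the existence of a $\beta(G)$-set that is also a strong metric generator for $G$ gives a strong resolving cover of size $\beta(G)$, hence $\beta_s(G)\leq\beta(G)$, and combined with $\beta_s(G)\geq\beta(G)$ from (\ref{max-cover-strongdim}) we obtain $\Sd_s(G,G^c)=\beta(G)$. Conversely, if $\Sd_s(G,G^c)=\beta(G)$, any simultaneous strong metric basis has cardinality $\beta(G)$ and, by Theorem \ref{GEqualGcSRtwoDirection}, is both a vertex cover (necessarily a $\beta(G)$-set, by minimality of $\beta(G)$) and a strong metric generator for $G$.

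There is no real obstacle here; the only thing to watch is to invoke the correct direction of Theorem \ref{GEqualGcSRtwoDirection} (which requires $D(G^c)=2$, guaranteed by hypothesis) in order to freely move between simultaneous strong metric generators of $\{G,G^c\}$ and strong resolving covers of $G$, and then to combine this with the trivial lower bounds $\beta_s(G)\geq\dim_s(G)$ and $\beta_s(G)\geq\beta(G)$ already recorded in (\ref{max-cover-strongdim}).
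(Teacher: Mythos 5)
Your proof is correct and follows essentially the same route the paper intends: the corollary is a direct consequence of the equality $\Sd_s(G,G^c)=\beta_s(G)$ from Corollary \ref{Dim=beta-s} (equivalently, Theorem \ref{GEqualGcSRtwoDirection}) combined with the lower bounds $\beta_s(G)\ge\max\{\dim_s(G),\beta(G)\}$ of Eq.(\ref{max-cover-strongdim}). Both of your converse arguments correctly use Theorem \ref{GEqualGcSRtwoDirection} to turn a simultaneous strong metric basis of $\{G,G^c\}$ into a strong resolving cover of the required cardinality, so nothing is missing.
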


\begin{figure}[!ht]
\centering
\begin{tikzpicture}[transform shape, inner sep = .7mm]
\def\spacey{1};
\def\spacex{2};
\foreach \x in {1,2,3}
{
\pgfmathparse{\spacex*(\x-1)};
\ifthenelse{\x=2}
{
\node [draw=black, shape=circle, fill=black] (\x) at (\pgfmathresult cm, 0) {};
}
{
\node [draw=black, shape=circle, fill=white] (\x) at (\pgfmathresult cm, 0) {};
};
\node [scale=.8] at ([yshift=-.3 cm]\x) {$\x$};
}
\pgfmathparse{3*\spacex};
\node [draw=black, shape=circle, fill=black] (4) at (\pgfmathresult cm, \spacey cm) {};
\node [scale=.8] at ([yshift=-.3 cm]4) {$4$};
\foreach \x/\c in {7/0,6/1,5/2}
{
\pgfmathparse{\spacex*\c};
\ifthenelse{\x=5}
{
\node [draw=black, shape=circle, fill=white] (\x) at (\pgfmathresult cm, 2*\spacey cm) {};
}
{
\node [draw=black, shape=circle, fill=black] (\x) at (\pgfmathresult cm, 2*\spacey cm) {};
};
\node [scale=.8] at ([yshift=.3 cm]\x) {$\x$};
}
\draw (2) -- (1) -- (6) -- (7) -- (2);
\draw (2) -- (3) -- (4) -- (5) -- (6);
\end{tikzpicture}
\caption{The graph $G$ satisfies that $\Sd_s(G,G^c)=\dim_s(G)=4>3=\beta(G)$.}
\label{figSdDim}
\end{figure}
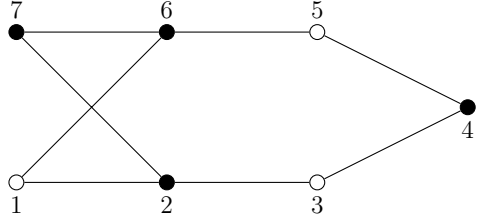

To illustrate the result above we take the graphs  shown in Figures 
 \ref{figSdBeta} and \ref{figSdDim}. In both cases  $D(G^c)=2$.
  Now, in the case of Figure \ref{figSdBeta},   the sets $\{1,5,6,7\}$ and $\{5,6,7,11\}$ are the only strong metric bases of $G$. At the same time, the set $\{1,5,6,7,11\}$ is the only $\beta(G)$-set which is a strong metric generator of $G$, and so it is the only $\beta_s(G)$-set. Therefore, $\Sd_s(G,G^c)=\beta_s(G)=\beta(G)=5>4=\dim_s(G)$.
In the case of 
Figure \ref{figSdDim},   $\Sd_s(G,G^c)=\beta_s(G)=\dim_s(G)=4>3=\beta(G)$, as $\{2,4,6,7\}$ is a strong metric basis of $G$ which is a vertex cover of $G$ and $\{2,4,6\}$ is a $\beta(G)$-set.

The hypercube $Q_r$, $r\ge 3$, of order $2^r$ is a $2$-antipodal graph and so $\dim_s(Q_r)=2^{r-1}$. Also, $Q_r$  is a bipartite graph and, for $r$ odd, any colour class form a strong metric basis  which is a vertex cover of minimum cardinality. Since $D(Q_r^c)=2$, we conclude that for any odd integer $r\ge 3$, 
\begin{equation}\label{FormulaHypercube}
\Sd_s(Q_r,Q_r^c)=\dim_s(Q_r)=\beta(Q_r)=2^{r-1}.
\end{equation} 
This is an example where $\Sd_s(G,G^c)=\dim_s(G)=\beta(G)$ and it is a particular case of the next result.

\begin{proposition}\label{Propo2-antipodalBipartiteOdd}
For any  bipartite $2$-antipodal graph $G$ of odd diameter  and order $n>2$,
$$\Sd_s(G,G^c)=\frac{n}{2}.$$
\end{proposition}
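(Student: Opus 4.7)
The plan is to establish the lower bound $\Sd_s(G,G^c) \ge n/2$ and the upper bound $\Sd_s(G,G^c) \le n/2$ separately, using the framework of strong resolving covers developed in Section~\ref{SectionComplement}.

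For the lower bound, I would invoke the observation that $\max_i \dim_s(G_i) \le \Sd_s(\mathcal{G})$. Since $G$ is $2$-antipodal of order $n$, by Observation~\ref{observation1}(b) we have $G_{SR} \cong \bigcup_{i=1}^{n/2} K_2$, and hence, by Theorem~\ref{lem_oellerman}, $\dim_s(G) = \beta(G_{SR}) = n/2$. This immediately gives $\Sd_s(G,G^c) \ge n/2$.

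For the upper bound, the idea is to exhibit a color class of the bipartition as a strong resolving cover for $G$, and then apply Theorem~\ref{ThCover-Generator}. Let $(A,B)$ be the bipartition of $G$. In a bipartite graph, the distance between two vertices has the same parity as the number of color classes separating them: even if they are in the same class, odd otherwise. Since $D(G)$ is odd, each vertex's unique antipode lies in the opposite color class, so the antipodal map is a bijection from $A$ to $B$, forcing $|A|=|B|=n/2$. Moreover, $G_{SR}$ is exactly the perfect matching formed by antipodal pairs, and every such edge crosses the bipartition; therefore $A$ is a vertex cover of $G_{SR}$, i.e., by Theorem~\ref{lem_oellerman}, a strong metric generator for $G$. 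Since $G$ is bipartite, $A$ is also a vertex cover of $G$, so $A$ is a strong resolving cover for $G$ of cardinality $n/2$.

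Finally, to invoke Theorem~\ref{ThCover-Generator}, I would check that $G^c$ is connected. Since $G$ is bipartite with $n>2$, $G$ is not complete, so $D(G)\ge 2$; combined with $D(G)$ odd, this yields $D(G)\ge 3$. But $G^c$ disconnected would mean every vertex of one component of $G^c$ is adjacent in $G$ to every vertex of any other component, forcing $D(G)\le 2$, a contradiction. Thus $G^c$ is connected, and Theorem~\ref{ThCover-Generator} yields that $A$ is a simultaneous strong metric generator for $\{G,G^c\}$, giving $\Sd_s(G,G^c)\le |A|=n/2$. Combining the two bounds completes the proof. The argument is essentially an assembly of earlier tools; the only mildly subtle point is the parity/bijection argument that pins down $|A|=|B|=n/2$ and identifies $A$ with a minimum vertex cover of $G_{SR}$, which is where the hypotheses of bipartiteness, odd diameter, and $2$-antipodality must all be used together.
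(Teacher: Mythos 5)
Your proof is correct, and its core is the same as the paper's: both arguments hinge on showing that one class $V_1$ of the bipartition is simultaneously a vertex cover of $G$ and a strong metric basis of $G$ (because, by parity and $2$-antipodality, $G_{SR}$ is a perfect matching whose edges all cross the bipartition, so $|V_1|=n/2=\dim_s(G)$). The difference is only in how the two bounds are assembled at the end. The paper first proves that $D(G^c)=2$ (each vertex of $V_1$ is nonadjacent in $G$ to its antipode, hence adjacent to some vertex of $V_2$ in $G^c$) and then invokes Corollary \ref{CorollaryEqualityto-dim(G)}, which rests on the equivalence of Theorem \ref{GEqualGcSRtwoDirection} and delivers the equality $\Sd_s(G,G^c)=\dim_s(G)$ in one stroke. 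You instead avoid computing $D(G^c)$ altogether: you use the one-directional Theorem \ref{ThCover-Generator}, whose only hypothesis is connectivity of $G^c$ (which you verify via a clean diameter argument, where the paper argues via the complete subgraphs induced by $V_1$ and $V_2$ in $G^c$), together with the trivial lower bound $\Sd_s(G,G^c)\ge\dim_s(G)=\beta(G_{SR})=n/2$ from Observation \ref{observation1}(b) and Theorem \ref{lem_oellerman}. Your route needs one extra (but immediate) ingredient for the lower bound, while the paper's route needs the extra step $D(G^c)=2$; both are valid, and yours is marginally more self-contained in that it bypasses the $\{G,G^c\}$-specific equivalence of Section \ref{SectionComplement} in favour of its weaker one-way version.
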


\begin{proof}
Let $G=(V_1\cup V_2,E)$. Since the subgraph of $G^c$   induced by $V_i$, $i\in \{1,2\}$, is complete and $G$ is not a complete bipartite graph, we conclude that $G^c$ is connected.
Furthermore, since $G$ is $2$-antipodal of odd diameter, each vertex $x\in V_1$ is adjacent to a vertex $x'\in V_2$ in $G^c$ and, as a result, $D(G^c)=2$.

On the other hand, $V_1$ is a vertex cover of $G$ and since $G$ is a $2$-antipodal graph and $D(G)$ is odd, for any $x\in V_1$ there exists exactly one vertex $x'\in V_2$ which is antipodal to $x$, which implies that $V_1$ is a strong metric basis of $G$. Therefore, by Corollary \ref{CorollaryEqualityto-dim(G)} we conclude the proof.
\end{proof}

An even-order cycle $C_{2k}$ has odd diameter if $k$ is odd, thus $\Sd_s(C_{2k},(C_{2k})^c)=k$ if $k$ is odd. Note that for $k$ even, $\Sd_s(C_{2k},(C_{2k})^c)=k+1$. 

If $G$ is a bipartite $2$-antipodal  graph, then the Cartesian product graph $G\Box K_2$ is bipartite and $2$-antipodal. Moreover, $D(G\Box K_2)=D(G)+1$. Therefore, Proposition \ref{Propo2-antipodalBipartiteOdd} immediately leads to the following result.

\begin{corollary}
For any  bipartite $2$-antipodal graph $G$ of even diameter  and order $n$,
$$\Sd_s(G\Box K_2,(G\Box K_2)^c)=n.$$
\end{corollary}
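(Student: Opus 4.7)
The plan is essentially a one-line reduction to Proposition~\ref{Propo2-antipodalBipartiteOdd}, exploiting the observation recorded immediately before the statement. First I would verify that the hypotheses of Proposition~\ref{Propo2-antipodalBipartiteOdd} transfer to the graph $G\Box K_2$. Since $G$ is bipartite and $K_2$ is bipartite, the Cartesian product $G\Box K_2$ is bipartite. The paragraph preceding the corollary already observes that $G\Box K_2$ is $2$-antipodal whenever $G$ is, and that $D(G\Box K_2)=D(G)+1$; I would restate these facts (or very briefly justify them from the standard formula $d_{G\Box H}((x,y),(x',y'))=d_G(x,x')+d_H(y,y')$, noting that the unique antipode of $(x,y)$ is $(\bar x,\bar y)$, where $\bar x$ is the unique antipode of $x$ in $G$ and $\bar y$ is the unique antipode of $y$ in $K_2$).

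Next I would use the assumption that $D(G)$ is even to conclude that $D(G\Box K_2)=D(G)+1$ is odd. The order of $G\Box K_2$ is $2n$, and since $G$ has positive even diameter we have $n\geq 3$, hence $2n>2$. So $G\Box K_2$ is a bipartite $2$-antipodal graph of odd diameter whose order exceeds $2$, meaning Proposition~\ref{Propo2-antipodalBipartiteOdd} applies to it.

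Finally, applying that proposition to $H:=G\Box K_2$ yields
\[
\Sd_s(G\Box K_2,(G\Box K_2)^c)=\frac{|V(H)|}{2}=\frac{2n}{2}=n,
\]
which is the desired equality.

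There is essentially no obstacle here; the only point that deserves a single sentence of care is the transfer of the $2$-antipodal property and the diameter formula from $G$ to $G\Box K_2$, and both are standard consequences of the additive metric on a Cartesian product. Everything else is bookkeeping of the hypotheses of Proposition~\ref{Propo2-antipodalBipartiteOdd}.
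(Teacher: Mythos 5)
Your proposal is correct and follows exactly the paper's route: the paper also derives the corollary by noting that $G\Box K_2$ is bipartite, $2$-antipodal, of diameter $D(G)+1$ (odd) and order $2n$, and then invoking Proposition~\ref{Propo2-antipodalBipartiteOdd}. Your extra care in checking the order hypothesis $2n>2$ and sketching why the antipode of $(x,y)$ is $(\bar x,\bar y)$ is a harmless (indeed slightly more explicit) elaboration of the same argument.
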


\begin{theorem}\label{GEqualGcSR}
Let $G$ be a connected graph. Then $G_{SR}=G^c$ if and only if  $D(G)=2$ and $G$ is a true twin-free graph. 
\end{theorem}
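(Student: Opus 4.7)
The plan is to prove each direction by carefully unpacking what it means for two vertices to be mutually maximally distant, given the diameter constraint. Throughout, assume $G$ has order $n \ge 2$ (the trivial case being vacuous).

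For the forward direction, assume $G_{SR}=G^c$. First I would show $D(G)=2$. Clearly $D(G)\ge 2$, since otherwise $G=K_n$, and then $G_{SR}\cong K_n$ (by Observation \ref{observation1}) would differ from the edgeless graph $K_n^c$. Suppose for contradiction that $D(G)\ge 3$, and take a shortest path $u_0,u_1,u_2,u_3,\dots$ in $G$ of length at least $3$. Then $u_0$ and $u_2$ are non-adjacent in $G$, hence adjacent in $G^c=G_{SR}$, so they must be mutually maximally distant in $G$; but $u_3\in N_G(u_2)$ with $d_G(u_0,u_3)=3>2=d_G(u_0,u_2)$, contradicting that $u_2$ is maximally distant from $u_0$. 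Next, for true twin-freeness, suppose $u,v$ are true twins, i.e.\ $N_G[u]=N_G[v]$. Then $u,v$ are adjacent in $G$, hence non-adjacent in $G^c$. But for any $w\in N_G(u)$ we have $w\in N_G[v]$, so $d_G(v,w)\le 1=d_G(u,v)$; symmetrically for neighbours of $v$. So $u,v$ are mutually maximally distant, i.e.\ adjacent in $G_{SR}$, contradicting $G_{SR}=G^c$.

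For the converse, assume $D(G)=2$ and $G$ has no true twins; I would verify the two edge conditions separately. If $u,v$ are non-adjacent in $G$ then $d_G(u,v)=2$. Because $D(G)=2$, for any $w\in N_G(u)$ we have $d_G(v,w)\le 2=d_G(u,v)$, so $u$ is maximally distant from $v$; symmetrically for $v$. Hence $\{u,v\}$ is an edge of $G_{SR}$. Conversely, suppose $u,v$ are adjacent in $G$ and, for contradiction, that they are mutually maximally distant. Then every $w\in N_G(u)$ satisfies $d_G(v,w)\le 1$, so $N_G(u)\subseteq N_G[v]$; combined with $u\in N_G(v)$ this gives $N_G[u]\subseteq N_G[v]$, and by symmetry $N_G[u]=N_G[v]$, making $u,v$ true twins, contradicting the hypothesis. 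So adjacency in $G$ excludes adjacency in $G_{SR}$. Combining, $G_{SR}=G^c$.

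The main obstacle is essentially just choosing the right witness in each contradiction: the vertex $u_3$ to kill $D(G)\ge 3$, and the symmetric neighbourhood argument to force true twins from a would-be adjacent mutually maximally distant pair. Both hinge on the observation that under $D(G)=2$, being mutually maximally distant is almost equivalent to being at distance $2$, except that true twins sneak in as an additional source of mutually maximally distant adjacent pairs — which is precisely why the true twin-free hypothesis is needed and sufficient.
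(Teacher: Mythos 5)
Your proof is correct and follows essentially the same route as the paper: in the necessity part you use a shortest-path witness to force $D(G)=2$ and observe that true twins would be mutually maximally distant yet non-adjacent in $G^c$, and in the sufficiency part you verify that under $D(G)=2$ and true-twin-freeness the mutually maximally distant pairs are exactly the pairs at distance two. Your write-up merely spells out in more detail the sufficiency step that the paper states tersely.
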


\begin{proof}
(Necessity) Assume that  $G_{SR}=G^c=(V,E)$, and let $u,v\in V$ be two mutually maximally distant vertices in $G$.

First consider that $u$ and $v$ are diametral vertices in $G$. Since $u$ and $v$ are mutually maximally distant in $G$ and $G_{SR}=G^c$, we obtain that $u$ and $v$ are adjacent in $G^c$ and, as a result, $D(G)=d_{G}(u,v)\ge 2$. 
Now, suppose that $d_{G}(u,v)>2$. Then  there exists $w\in N_{G}(v)-N_{G}(u)$ such that $d_{G}(u,w)=D(G)-1\ge 2$. Hence,  $w$ and $u$ are not mutually maximally distant in $G$ and $w\in N_G(u)$, which contradicts the fact that
$G_{SR}=G^c$. Therefore,  $D(G)=2$.

Now assume that $u$ and $v$ are true twins in $G$. We have that $u$ and $v$ are false twins in $G^c$ and, as a result,  they are not adjacent in  $G^c$ and they are mutually maximally distant in $G$, which contradicts  the fact that $G_{SR}=G^c$. Therefore, $G$ is a true twin-free graph.

(Sufficiency) If $G=(V,E)$ is a true twin-free graph and $D(G)=2$, then two vertices $u,v$ are mutually maximally distant in $G$ if and only if $d_{G}(u,v)=2$. Therefore, $G_{SR}=G^c$.
\end{proof}

Odd-order cycles are an example of the previous result, as $[(C_{2k+1})^c]_{SR}=C_{2k+1}$. Moreover, it is not difficult to show  that a simultaneous strong metric basis of $\{C_{2k+1}, (C_{2k+1})^c\}$ is the minimum union of a strong metric basis and a minimum vertex cover of $C_{2k+1}$, so $$\Sd_s(C_{2k+1}, (C_{2k+1})^c)=k+\left\lfloor\frac{k}{2}\right\rfloor+1.$$

\begin{corollary}\label{corollaryTwinsFree}
Let $G$ be a true twin-free graph such that $D(G)=2$. Then the following assertions hold. 
\begin{itemize}
\item $\Sd_s(G,G^c)=\dim_s(G)$ if and only if there exists a $\beta(G^c)$-set which is a strong metric generator for $G^c$.
\item $\Sd_s(G,G^c)=\dim_s(G)=\dim_s(G^c)$ if and only if there exists a  $\beta(G^c)$-set which is a strong metric basis of $G^c$.
\end{itemize}
\end{corollary}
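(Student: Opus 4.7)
The plan is to reduce both equivalences to elementary identities among $\beta_s(G^c)$, $\beta(G^c)$, and $\dim_s(G^c)$ by combining two results already established in this section. First, I would invoke Theorem \ref{GEqualGcSR}: since $G$ is true twin-free with $D(G)=2$, we have $G_{SR}=G^c$, and Theorem \ref{lem_oellerman} then yields
$$\dim_s(G)=\beta(G_{SR})=\beta(G^c).$$
Second, I would apply Corollary \ref{Dim=beta-s} with the roles of $G$ and $G^c$ interchanged; the hypothesis $D((G^c)^c)=D(G)=2$ holds by assumption, so
$$\Sd_s(G,G^c)=\Sd_s(G^c,(G^c)^c)=\beta_s(G^c).$$

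For the first assertion, the condition $\Sd_s(G,G^c)=\dim_s(G)$ becomes $\beta_s(G^c)=\beta(G^c)$. By inequality (\ref{max-cover-strongdim}) we always have $\beta_s(G^c)\ge \beta(G^c)$, and equality holds precisely when some strong resolving cover of $G^c$ attains size $\beta(G^c)$, i.e., when some $\beta(G^c)$-set is also a strong metric generator for $G^c$. This gives the biconditional directly from the definition of $\beta_s$.

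For the second assertion, the combined condition $\Sd_s(G,G^c)=\dim_s(G)=\dim_s(G^c)$ translates to $\beta_s(G^c)=\beta(G^c)=\dim_s(G^c)$. For the forward direction, the first assertion already furnishes a $\beta(G^c)$-set $S$ that is a strong metric generator of $G^c$; the extra equality $\beta(G^c)=\dim_s(G^c)$ then forces $|S|=\dim_s(G^c)$, so $S$ is in fact a strong metric basis of $G^c$. For the converse, a $\beta(G^c)$-set that is a strong metric basis of $G^c$ is simultaneously a vertex cover of size $\beta(G^c)$ and a strong metric generator of size $\dim_s(G^c)$, hence a strong resolving cover, forcing $\beta_s(G^c)=\beta(G^c)=\dim_s(G^c)$ and therefore all three quantities in the displayed equality to coincide.

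The entire argument is a short chain of substitutions; the substantive work is already contained in Theorem \ref{GEqualGcSR} and Corollary \ref{Dim=beta-s}. The only point requiring mild care is the implicit assumption that $G^c$ is connected (needed to speak of $\dim_s(G^c)$ and $\beta_s(G^c)$), but this is already built into the hypothesis $\Sd_s(G,G^c)$ being a meaningful quantity within the framework of this section. I do not expect any genuine obstacle.
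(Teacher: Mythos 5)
Your proposal is correct and follows essentially the same route the paper intends for this corollary: Theorem \ref{GEqualGcSR} together with Theorem \ref{lem_oellerman} gives $\dim_s(G)=\beta(G_{SR})=\beta(G^c)$, and Corollary \ref{Dim=beta-s} applied with $G$ and $G^c$ interchanged gives $\Sd_s(G,G^c)=\beta_s(G^c)$, after which both equivalences are the bookkeeping you describe (your use of Eq.~(\ref{max-cover-strongdim}) in place of a direct citation of the second item of Corollary \ref{CorollaryEqualityto-dim(G)} is an immaterial difference). Your remark about the implicit connectedness of $G^c$ matches the paper's own implicit convention, so no gap there.
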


The complement of the graph shown in Figure \ref{figSdBeta} has diameter two and $\{1,5,6,7,11\}$ is a $\beta(G)$-set which is a strong metric generator for $G$, so that $\Sd_s(G,G^c)=\dim_s(G)$.

Given a graph $G$, it is well-known that $D(G)\ge 4$ leads to $D(G^c)=2$. Hence,  $D(G )\ne 2$ and  $D(G^c)\ne 2$ if and only if $D(G )=D(G^c)= 3.$ In particular, for the case of trees we have that  $D(T )=3$  if and only if $D(T^c )=3.$

\begin{proposition}\label{treeDiam3andCompl}
Let $T$ be a tree of order $n$. If  $D(T)=3$, then $$\Sd_s(T,T^c)=n-2.$$
\end{proposition}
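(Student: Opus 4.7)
The plan is to exploit the structural rigidity of trees of diameter $3$ to compute $T_{SR}$ and $(T^c)_{SR}$ explicitly, and then read off $\Sd_s(T,T^c)$ as the vertex cover number of their edge union.

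First, I would observe that $D(T)=3$ forces $T$ to be a double star: there are two adjacent non-leaf vertices $u,v$ with $r\ge 1$ leaves $x_1,\dots,x_r$ attached to $u$ and $s\ge 1$ leaves $y_1,\dots,y_s$ attached to $v$, so $n=r+s+2$. By Observation \ref{observation1}(a), $T_{SR}\cong K_{r+s}$ on the leaf set $L=\{x_1,\dots,x_r,y_1,\dots,y_s\}$.

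Second, I would compute $(T^c)_{SR}$ by case analysis. In $T^c$ we have $N_{T^c}(u)=\{y_1,\dots,y_s\}$ and $N_{T^c}(v)=\{x_1,\dots,x_r\}$, which are disjoint, so $d_{T^c}(u,v)=3$ and an easy check shows $u,v$ are mutually maximally distant in $T^c$. Any two same-side leaves $x_i,x_j$ (respectively $y_i,y_j$) are false twins in $T$, hence true twins in $T^c$, and thus mutually maximally distant in $T^c$. The remaining pairs fail to be mutually maximally distant: for a leaf $x_i$, its neighbor $v$ in $T^c$ satisfies $d_{T^c}(v,u)=3>2=d_{T^c}(x_i,u)$, so $x_i$ is not maximally distant from $u$; similarly $y_j$ is not maximally distant from $v$, $x_i$ is not maximally distant from $y_j$ (its neighbor $v$ lies at distance $2$ from $y_j$), and symmetrically for $y_j$ and $x_i$. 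Therefore $(T^c)_{SR}$ has edge set $\{uv\}\cup\binom{\{x_1,\dots,x_r\}}{2}\cup\binom{\{y_1,\dots,y_s\}}{2}$.

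Third, I would combine these via Theorem \ref{lem_oellerman}: since a set $S\subset V$ is a strong metric generator for a graph $H$ iff $S$ is a vertex cover of $H_{SR}$, a set $S$ is a simultaneous strong metric generator of $\{T,T^c\}$ iff $S$ is simultaneously a vertex cover of $T_{SR}$ and of $(T^c)_{SR}$, equivalently a vertex cover of the graph on $V$ whose edge set is $E(T_{SR})\cup E((T^c)_{SR})=\binom{L}{2}\cup\{uv\}$. Since the clique on $L$ and the edge $uv$ are vertex-disjoint, a minimum vertex cover uses at least $r+s-1$ of the leaves plus at least one of $u,v$, yielding
\[
\Sd_s(T,T^c)=(r+s-1)+1=n-2.
\]

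The main obstacle is the case analysis in the second step: one must verify carefully that no leaf--center or opposite-side-leaf pair is mutually maximally distant in $T^c$, and the degenerate cases $r=1$ or $s=1$ (in particular $T=P_4$) should be observed to behave consistently with the argument above.
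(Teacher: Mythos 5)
Your proof is correct, but it reaches the result by a more explicit route than the paper. You determine both strong resolving graphs in full: $T_{SR}$ is the clique on the $n-2$ leaves (Observation \ref{observation1}(a)), and a case analysis of the double star shows $(T^c)_{SR}$ consists of the edge $uv$ together with the cliques on the two sets of same-side leaves; then a simultaneous strong metric generator is exactly a vertex cover of the union $\binom{L}{2}\cup\{uv\}$, whose cover number is visibly $n-2$. The paper instead uses only partial information about mutually maximally distant pairs: for the lower bound it observes that the leaves are pairwise mutually maximally distant in $T$ and that $u,v$ are diametral (hence mutually maximally distant) in $T^c$, forcing any simultaneous basis to contain all leaves but one plus one of $u,v$; and it gets the upper bound $\Sd_s(T,T^c)\le n-2$ for free from Corollary \ref{CorollarySD=n-1} (via Theorem \ref{ThGeneralUpperBound}), since $D(T)=3\ne 2$, thereby avoiding the computation of $(T^c)_{SR}$ altogether. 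Your version buys a self-contained argument that exhibits the extremal generators explicitly; the paper's version is shorter because the machinery of Section \ref{SectionComplement} absorbs the upper bound. One small point in your case analysis: you do not explicitly rule out the pairs $(u,y_j)$ and $(v,x_i)$ (a centre and an adjacent opposite-side leaf in $T^c$); the same one-line argument applies, e.g.\ $y_j$ has the neighbour $x_1$ in $T^c$ with $d_{T^c}(x_1,u)=2>1=d_{T^c}(y_j,u)$, so $y_j$ is not maximally distant from $u$, and the stated edge set of $(T^c)_{SR}$ stands. Also note that the equivalence ``strong metric generator iff vertex cover of the strong resolving graph'' that you invoke is the set-level form of Theorem \ref{lem_oellerman}, which the paper itself uses freely (e.g.\ in Theorems \ref{ThGeneralUpperBound} and \ref{ThCover-Generator}), so this is a legitimate tool here.
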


\begin{proof}
Notice that $T$ has  $l(T)=n-2$ leaves. Let $u$ and $v$ be the two interior vertices of $T$. We have that $D(T^c)=3$ and $d_{T^c}(u,v)=3$. Any simultaneous strong metric basis of $\{T,T^c\}$ must contain all leaves of $T$, except one, and one of $u$ and $v$, so $\Sd_s(T,T^c)\ge l(T)-1+1=n-2$. Moreover, by Corollary \ref{CorollarySD=n-1} we have  that $\Sd_s(T,T^c)\le n-2$ and so the equality holds.
\end{proof}

\begin{proposition}\label{treeDiamGe3andCompl}
Let $T$ be a tree of order $n$ such that $D(T) \ge 4$, let $l(T)$ be the number of leaves of $T$,  let $u$ be a leaf of $T$, and let $T'_u$ be the tree obtained from $T$ by removing all leaves, except $u$. Then, $$\beta(\mathring{T})+l(T)-1 \le \Sd_s(T,T^c) \le \beta(T'_u)+l(T)-1.$$
\end{proposition}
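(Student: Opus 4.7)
The plan is to exploit that $D(T)\ge 4$ forces $D(T^c)=2$, so by Corollary~\ref{Dim=beta-s} we have $\Sd_s(T,T^c)=\beta_s(T)$, and both bounds reduce to statements about strong resolving covers of $T$ alone. Throughout, note that since the boundary of a tree is exactly its leaf set, Observation~\ref{observation1}(a) yields $T_{SR}\cong K_{l(T)}$ and hence $\dim_s(T)=l(T)-1$; moreover any strong metric generator of $T$ must contain all but at most one leaf.

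For the lower bound, I would apply Corollary~\ref{CorollaryInequalityMax} directly. Since $D(T)\ge 4\ge 2$, the tree has at least one interior vertex, so $V(T)\neq\partial(T)$ and therefore $\mathring{\beta}(T)=\beta(\mathring{T})$ by definition. Corollary~\ref{CorollaryInequalityMax} then gives
\[
\Sd_s(T,T^c)\ \ge\ \dim_s(T)+\mathring{\beta}(T)\ =\ (l(T)-1)+\beta(\mathring{T}),
\]
which is exactly the claimed lower bound.

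For the upper bound, I would explicitly construct a strong resolving cover of $T$ of the required size. Let $C$ be a minimum vertex cover of $T'_u$, so $|C|=\beta(T'_u)$, and set
\[
S\ =\ C\ \cup\ \bigl(L(T)\setminus\{u\}\bigr),
\]
where $L(T)$ denotes the leaves of $T$. I would verify three things. First, $S$ is a vertex cover of $T$: every edge of $T$ either lies in $T'_u$ (hence is covered by $C$) or joins a leaf distinct from $u$ to its interior neighbour (hence is covered by $L(T)\setminus\{u\}$). Second, $S$ is a strong metric generator for $T$: since $T_{SR}\cong K_{l(T)}$, the set $L(T)\setminus\{u\}$ is already a strong metric basis of $T$, and $S$ contains it. Third, $|S|\le |C|+|L(T)\setminus\{u\}|=\beta(T'_u)+l(T)-1$. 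Therefore $S$ is a strong resolving cover of $T$, and by Corollary~\ref{Dim=beta-s},
\[
\Sd_s(T,T^c)\ =\ \beta_s(T)\ \le\ |S|\ \le\ \beta(T'_u)+l(T)-1.
\]

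There is really no hard step here, because Corollary~\ref{Dim=beta-s} and Corollary~\ref{CorollaryInequalityMax} do the heavy lifting; the only mild subtlety is the bookkeeping in the upper bound, namely checking that the single pendant edge of $T$ incident with $u$ is covered by $C$ (it is, because that edge survives in $T'_u$) and that $L(T)\setminus\{u\}$ already suffices as a strong metric generator so $C$ needs only to handle vertex-cover duties on $T'_u$. The potential pitfall would be accidentally double-counting vertices in $C\cap L(T)$, but since the inequality allows $|S|\le|C|+l(T)-1$, any overlap only helps.
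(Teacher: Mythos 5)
Your proposal is correct and follows essentially the same route as the paper: the lower bound is obtained from Corollary~\ref{CorollaryInequalityMax} using $\dim_s(T)=l(T)-1$ and $\mathring{\beta}(T)=\beta(\mathring{T})$, and the upper bound by exhibiting the strong resolving cover formed by a $\beta(T'_u)$-set together with all leaves except $u$, then invoking $\Sd_s(T,T^c)=\beta_s(T)$ since $D(T^c)=2$. Your additional bookkeeping (checking edge coverage and that the leaves minus $u$ already form a strong metric basis) just spells out what the paper leaves as ``notice that''.
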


\begin{proof}
Note that $\dim_s(T)=l(T)-1$ and $\mathring{\beta}(T)=\beta(\mathring{T})$. Thus, by Corollary \ref{CorollaryInequalityMax}, $\Sd_s(T,T^c)\ge\max\{l(T)-1+\beta(\mathring{T}),\beta(T)\}$, and as a consequence, $\Sd_s(T,T^c)\ge \beta(\mathring{T})+l(T)-1$.

To prove the upper bound, let  $X$ be a   $\beta(T'_u)$-set and let $Y\subset V(T)$ be the set    composed by all leaves of $T$, except $u$. Notice that $X\cup Y$ is a strong resolving cover  of $T$ and $X\cap Y=\emptyset$. Now, since $D(T^c)=2$, by Corollary \ref{GEqualGcSRtwoDirection} we conclude  that  $\Sd_s(T,T^c)=\beta_s(T)\le |X|+|Y|=\beta(T'_u)+l(T)-1$.
\end{proof}

A particular case of the previous result 
is that of caterpillar trees $T$ such that
 $T'_u \cong P_{n-l(T)+1}$ for every leaf $u$ of $T$. In this case, we have that $\Sd_s(T,T^c)=l(T)+\left\lceil\frac{n-l(T)}{2}\right\rceil-1$. Moreover, if $D(T)=4$, then $\mathring{T}$ is a star graph. On the other hand, if $D(T)=5$, then $\mathring{T}$ is composed by exactly two interior vertices and $l(\mathring{T})=n-l(T)-2$ leaves. With these facts in mind, the following  two results are straightforward consequences of Proposition~\ref{treeDiamGe3andCompl}.

\begin{corollary}
\label{treeDiam4andCompl}
Let $T$ be a tree of order $n$ such that $D(T)=4$. If the central vertex of $\mathring{T}$ is a support vertex of $T$, then $$\Sd_s(T,T^c)=l(T).$$
Otherwise, $$\Sd_s(T,T^c)=l(T)+1.$$
\end{corollary}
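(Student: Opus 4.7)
The plan is to apply Proposition~\ref{treeDiamGe3andCompl} together with a case analysis driven by the hypothesis on the central vertex of $\mathring{T}$. Since $D(T)=4$, the interior subgraph $\mathring{T}$ is a tree of diameter $D(T)-2=2$, hence a star $K_{1,m}$ with $m\ge 2$ whose center I denote by $c$. In particular $\beta(\mathring{T})=1$, so the lower bound in Proposition~\ref{treeDiamGe3andCompl} immediately yields $\Sd_s(T,T^c)\ge l(T)$.

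For the first assertion, suppose $c$ is a support vertex of $T$ and let $u$ be any leaf of $T$ adjacent to $c$. Then $T'_u$ is obtained from the star $\mathring{T}$ by attaching the pendant $u$ to its center $c$, so $T'_u$ is again a star centered at $c$ and $\beta(T'_u)=1$. Proposition~\ref{treeDiamGe3andCompl} gives $\Sd_s(T,T^c)\le l(T)$, matching the lower bound. For the upper bound in the second assertion, assume $c$ is not a support vertex; then for any leaf $u$ of $T$ its support $v$ is a non-central vertex of $\mathring{T}$, and $T'_u$ is the star $\mathring{T}$ with an extra pendant $u$ attached to the leaf $v$. Since the edges $uv$ and $cw$ (for any leaf $w\ne v$ of $\mathring{T}$, which exists because $m\ge 2$) share no vertex, $\beta(T'_u)\ge 2$, while $\{c,v\}$ is a vertex cover of $T'_u$, so $\beta(T'_u)=2$. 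Proposition~\ref{treeDiamGe3andCompl} therefore yields $\Sd_s(T,T^c)\le l(T)+1$.

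The main obstacle is the matching lower bound $\Sd_s(T,T^c)\ge l(T)+1$ in the second case, which is not supplied by Proposition~\ref{treeDiamGe3andCompl}. To close this gap, I invoke Corollary~\ref{GEqualGcSRtwoDirection} (applicable because $D(T^c)=2$ whenever $D(T)\ge 4$) to rewrite $\Sd_s(T,T^c)=\beta_s(T)$, and Observation~\ref{observation1}(a), which gives $T_{SR}\cong K_{l(T)}$ and therefore forces every strong metric generator of $T$ to contain at least $l(T)-1$ leaves. Let $S$ be a minimum strong resolving cover of $T$. If $S$ contains all $l(T)$ leaves, then since no leaf of $T$ is incident with any edge of $\mathring{T}$, at least $\beta(\mathring{T})=1$ additional non-leaf vertex must lie in $S$, so $|S|\ge l(T)+1$. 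If instead $S$ omits exactly one leaf $u$, then its support $v$ must belong to $S$ in order to cover the edge $uv$; because $c$ is not a support vertex of $T$ we have $v\ne c$, so $v$ is a leaf of $\mathring{T}$ and is incident in $\mathring{T}$ only to the edge $cv$. Since $m\ge 2$ there is some edge $cw$ of $\mathring{T}$ with $w\ne v$ that remains uncovered and requires a further non-leaf vertex of $T$, giving $|S|\ge (l(T)-1)+2=l(T)+1$. Combining both possibilities closes the gap and proves the claim.
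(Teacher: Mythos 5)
Your proof is correct. The paper offers no written proof of this corollary: it simply declares it (together with the $D(T)=5$ case) a ``straightforward consequence'' of Proposition~\ref{treeDiamGe3andCompl}, after noting that $\mathring{T}$ is a star when $D(T)=4$. Your use of that proposition matches the intended route exactly -- $\beta(\mathring{T})=1$ gives the lower bound $l(T)$, and the computation $\beta(T'_u)=1$ (central vertex a support, $u$ attached to $c$) versus $\beta(T'_u)=2$ (otherwise) gives the two upper bounds. Where you genuinely add something is the lower bound $l(T)+1$ in the second case: you are right that it does \emph{not} follow from the stated bounds of Proposition~\ref{treeDiamGe3andCompl} (nor from Corollary~\ref{CorollaryInequalityMax}, since $\beta(T)$ can equal $l(T)$ there, e.g.\ for $P_5$), so the paper's claim of straightforwardness glosses over exactly this point. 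Your argument closing the gap -- rewriting $\Sd_s(T,T^c)=\beta_s(T)$ via the $D(T^c)=2$ results, using $T_{SR}\cong K_{l(T)}$ to force at least $l(T)-1$ leaves into any strong resolving cover, and then the two-case count (all leaves plus one interior vertex, or $l(T)-1$ leaves plus the support $v\neq c$ plus a second interior vertex to cover an edge $cw$ of the star) -- is sound and is in effect a finer-grained rerun of the proof technique of Proposition~\ref{treeDiamGe3andCompl} itself. One cosmetic remark: the equality $\Sd_s(T,T^c)=\beta_s(T)$ is Corollary~\ref{Dim=beta-s} rather than Theorem~\ref{GEqualGcSRtwoDirection} (the paper commits the same citation slip in the proof of Proposition~\ref{treeDiamGe3andCompl}), but this does not affect correctness.
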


\begin{corollary}
\label{treeDiam5andCompl}
Let $T$ be a tree of order $n$ such that $D(T)=5$. If an interior vertex of $\mathring{T}$ is a support vertex of $T$, then $$\Sd_s(T,T^c)=l(T)+1.$$
Otherwise, $$\Sd_s(T,T^c)=l(T)+2.$$
\end{corollary}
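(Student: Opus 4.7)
The plan is to specialize Proposition~\ref{treeDiamGe3andCompl} to the case $D(T)=5$ by pinning down the two vertex-cover numbers $\beta(\mathring{T})$ and $\beta(T'_u)$ that appear in that bound. Fix a diameter path $v_0v_1c_1c_2v_4v_5$ of $T$, so that $c_1,c_2$ are the two centers of the diameter. A short length argument shows that any non-leaf neighbor of $c_1$ (resp.\ $c_2$) different from $c_2$ (resp.\ $c_1$) has only leaves as its remaining neighbors, since otherwise a path of length $6$ would appear in $T$. Consequently $\mathring{T}$ is a double-star: its only interior vertices are $c_1,c_2$, joined by an edge, and its leaves are precisely the support vertices of $T$ different from $c_1,c_2$, each pendant in $\mathring{T}$ to exactly one of $c_1,c_2$. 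The edges $c_1v_1$ and $c_2v_4$ are independent in $\mathring{T}$, yet $\{c_1,c_2\}$ is a vertex cover of $\mathring{T}$, so $\beta(\mathring{T})=2$. Proposition~\ref{treeDiamGe3andCompl} then gives the preliminary lower bound $\Sd_s(T,T^c)\ge l(T)+1$.

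Next I would treat the two cases by choosing the auxiliary leaf $u$ cleverly. In the first case, some interior vertex of $\mathring{T}$ — say $c_1$ — is a support vertex of $T$: take $u$ to be any leaf of $T$ adjacent to $c_1$. Then $T'_u$ is obtained from $\mathring{T}$ by appending the single pendant edge $uc_1$, and $\{c_1,c_2\}$ still covers every edge, so $\beta(T'_u)=2$. Proposition~\ref{treeDiamGe3andCompl} delivers $\Sd_s(T,T^c)\le l(T)+1$, which matches the lower bound and yields $\Sd_s(T,T^c)=l(T)+1$. In the second case, neither $c_1$ nor $c_2$ is a support vertex of $T$, so every leaf $u$ of $T$ is pendant in $T$ to some leaf $v$ of $\mathring{T}$; the set $\{v,c_1,c_2\}$ is then a vertex cover of $T'_u$, giving $\beta(T'_u)\le 3$ and therefore the upper bound $\Sd_s(T,T^c)\le l(T)+2$.

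The main obstacle is sharpening the lower bound from $l(T)+1$ to $l(T)+2$ in this second case. For that I would argue directly on a simultaneous strong metric basis $B$ of $\{T,T^c\}$. By Corollary~\ref{Dim=beta-s}, which applies since $D(T)\ge 4$ forces $D(T^c)=2$, $B$ is a strong resolving cover of $T$, and since $T_{SR}\cong K_{l(T)}$, $B$ must contain at least $l(T)-1$ leaves of $T$. Either $B$ contains every leaf of $T$, in which case $B\cap V(\mathring{T})$ is forced to be a vertex cover of $\mathring{T}$ and contributes at least $\beta(\mathring{T})=2$, giving $|B|\ge l(T)+2$; or $B$ omits exactly one leaf $u$, and then the support $v$ of $u$ must lie in $B$. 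In the second subcase, the hypothesis that no $c_i$ is a support vertex of $T$ forces $v$ to be a leaf of $\mathring{T}$, so $v$ covers only the single edge $vc_i$ of $\mathring{T}$, leaving the central edge $c_1c_2$ together with the remaining pendant edges of the double-star uncovered. The delicate combinatorial point, on which the argument hinges, is verifying that these residual edges genuinely force two further vertices of $\mathring{T}$ into $B$ (rather than one); carrying out this accounting carefully, while tracking the distribution of leaves of $\mathring{T}$ around $c_1$ and $c_2$, is what completes the matching lower bound $|B|\ge l(T)+2$.
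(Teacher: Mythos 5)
Your structural analysis of $\mathring{T}$ as a double star with centres $c_1,c_2$, the computation $\beta(\mathring{T})=2$, the resulting lower bound $l(T)+1$, and the whole of Case 1 are correct, and they follow the same route as the paper, namely specializing Proposition~\ref{treeDiamGe3andCompl}; the Case 2 upper bound $\beta(T'_u)\le 3$ is also fine. The genuine gap is exactly the point you flag and postpone: the lower bound $l(T)+2$ in Case 2. You reduce it, via Corollary~\ref{Dim=beta-s}, to showing that a strong resolving cover $B$ of $T$ that omits one leaf $u$ and therefore contains its support $v$ must pick up \emph{two} further vertices of $\mathring{T}$. That accounting cannot be carried out in general: if $v$ happens to be the only leaf of $\mathring{T}$ adjacent to its centre, say $c_1$, then the single vertex $c_2$ covers every edge of $\mathring{T}$ left uncovered by $v$ (the edge $c_1c_2$ and all remaining pendant edges of $\mathring{T}$, which are then all incident to $c_2$), so only one further vertex is forced, and a strong resolving cover of size $l(T)+1$ actually exists: all leaves of $T$ except $u$, together with $v$ and $c_2$.

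Concretely, $T=P_6$ with vertices $1,2,\ldots,6$ lies in the ``otherwise'' case (neither centre $3$ nor $4$ of $\mathring{T}$ is a support vertex of $T$), yet $\{1,3,5\}$ is a vertex cover of $P_6$ containing a leaf, hence a strong resolving cover, and $D(P_6^c)=2$, so $\Sd_s(P_6,P_6^c)=\beta_s(P_6)=3=l(T)+1$, not $l(T)+2$. The same happens for any diameter-$5$ tree in which some interior vertex of $\mathring{T}$ has exactly one leaf neighbour in $\mathring{T}$. So the ``delicate combinatorial point'' on which your argument hinges is not merely unfinished, it is false as stated: the equality $\Sd_s(T,T^c)=l(T)+2$ needs the additional hypothesis that both interior vertices of $\mathring{T}$ have at least two leaf neighbours in $\mathring{T}$. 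Note that the paper itself gives no detailed argument here, presenting the corollary as a straightforward consequence of Proposition~\ref{treeDiamGe3andCompl}, whose lower bound only yields $l(T)+1$; by isolating this step honestly, your write-up has in effect located the place where both your argument and the paper's terse derivation require repair.
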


In general, the bounds shown in Proposition~\ref{treeDiamGe3andCompl} can be efficiently computed, as $\beta(T)$ can be computed in $O(n^{1.5})$ time for any tree $T$ \cite{Savage1982}.

\vspace{-0.4cm}
\section{Computability of the Simultaneous Strong Metric Dimension}\label{SectionComplexity}\label{SectionComputability}

It was shown in \cite{Oellermann2007} that the problem of finding the strong metric dimension of a graph, when stated as a decision problem, is $NP$-complete. This problem is formally stated as a decision problem as follows:

\medskip

\noindent{\bf Strong Metric Dimension (SDIM)}\\
INSTANCE: A graph $G=(V,E)$ and an integer $p$, $1 \le p \le |V(G)|-1$.\\
QUESTION: Is $\dim_s(G) \le p$?

\medskip

In an analogous manner, we define the decision problem associated to finding the simultaneous strong metric dimension of a graph family.

\medskip

\noindent{\bf Simultaneous Strong Metric Dimension (SSD)}\\
INSTANCE: A graph family ${\mathcal{G}}=\{G_1,G_2, \ldots, G_k\}$ defined on a common vertex set $V$ and an integer $p$, $1 \le p \le |V|-1$.\\
QUESTION: Is $\Sd_s({\mathcal{G}}) \le p$?

\medskip

It is straightforward to see that SSD is $NP$-complete in the general case, as determining whether a vertex set $S \subset V$, $|S| \le p$, is a simultaneous strong metric generator for a graph family ${\cal G}$ can be done in polynomial time, and for any graph $G=(V,E)$ and any integer $1 \le p \le |V(G)|-1$, the corresponding instance of SDIM can be transformed into an instance of SSD in polynomial time by making ${\cal G}=\{G\}$.

Here  we will discuss how the requirement of simultaneity makes computing the simultaneous strong metric dimension difficult, even for families composed by graphs whose individual strong metric dimension is easily computable. In particular, we will analyse the case of families composed by trees. As we have previously pointed out,  the  strong metric dimension
of any tree $T$ equals the number of leaves minus one, and  every set composed by all but one of its leaves is a strong metric basis \cite{Sebo2004}. Thus, for any tree $T$, a postorder traversal allows to compute $\dim_s(T)$ in polynomial time. However, the problem of finding the simultaneous strong metric dimension of a family of trees is $NP$-hard, as we will show. To this end, we will use a reduction of a subcase of the {Hitting Set Problem} (HSP), which was shown to be $NP$-complete by Karp \cite{Karp1972}. HSP is defined as follows:

\medskip

\noindent{\bf Hitting Set Problem (HSP)}\\
INSTANCE: A collection ${\cal C}=\{C_1,C_2,\ldots,C_k\}$ of non-empty subsets of a finite set $S$ and a positive integer $p\leq|S|$.\\
QUESTION: Is there a subset $S' \subseteq S$ with $|S'|\leq p$ such that $S'$ contains at least one element from each subset in ${\cal C}$?

\begin{theorem}
The SSD Problem is $NP$-complete for families of trees.
\end{theorem}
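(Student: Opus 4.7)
The plan is to establish NP-completeness in two parts: membership in NP, and NP-hardness via reduction from a subcase of HSP. For NP membership, a candidate $B \subseteq V$ with $|B| \le p$ is verified as a simultaneous strong metric generator by checking, for each $T_i \in \mathcal{G}$, that at most one leaf of $T_i$ lies outside $B$. This is the correct criterion because, by Observation~\ref{observation1}(a), $(T_i)_{SR}$ is the complete graph on the leaves of $T_i$, and Theorem~\ref{lem_oellerman} gives $\dim_s(T_i) = \beta((T_i)_{SR}) = l(T_i)-1$ with any set of $l(T_i)-1$ leaves forming a strong metric basis. The check is linear in $|V|$ per tree, so the overall verification is polynomial.

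For the hardness direction, I reduce from the restriction of HSP to collections of $2$-element subsets, which coincides with the Vertex Cover problem on the graph whose edges are those subsets and is NP-complete by Karp. Given such an instance with universe $S$, collection $\mathcal{C}=\{C_1,\ldots,C_k\}$ of $2$-element sets, and threshold $p$, set $V=S$ as the common vertex set. For each $C_i=\{u_i,v_i\}$, construct a tree $T_i$ that is a Hamiltonian path on $V$ with $u_i$ and $v_i$ as its endpoints (any fixed ordering of the remaining vertices along the interior of the path suffices). The leaf set of $T_i$ is exactly $\{u_i,v_i\}=C_i$, and the family $\mathcal{G}=\{T_1,\ldots,T_k\}$ is produced in time $O(k|V|)$, polynomial in the input size.

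Applying the criterion from the membership argument, $B \subseteq V$ is a simultaneous strong metric generator for $\mathcal{G}$ iff for every $i$ we have $B\cap\{u_i,v_i\}\ne\emptyset$, iff $B\cap C_i\ne\emptyset$ for every $i$, iff $B$ is a hitting set for $\mathcal{C}$. Hence $\Sd_s(\mathcal{G})\le p$ iff the HSP instance admits a hitting set of size at most $p$, which completes the reduction and establishes NP-hardness; together with NP membership, this yields NP-completeness for families of trees. I do not foresee any serious obstacle beyond choosing the right tree template: Hamiltonian paths make each $T_i$ encode the single Boolean constraint $u_i\in B$ or $v_i\in B$, so the simultaneous requirement collapses exactly to the $2$-element HSP / Vertex Cover instance we started from.
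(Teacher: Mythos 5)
Your proof is correct, and it takes a genuinely different route from the paper's, though within the same overall template (NP membership by polynomial verification, NP-hardness by reduction from a hitting-set-type problem). The paper reduces from HSP restricted to sets of size \emph{at most} two: it enlarges the vertex set to $S\cup S'\cup\{u\}$ and, for each $C_r$, builds a tree with exactly three leaves ($u$ together with the elements of $C_r$, padded by $w_{i_r}$ when $|C_r|=1$), so the correspondence reads $\Sd_s(\mathcal{T})\le p+1$. You instead reduce from the exactly-two-element case (i.e.\ Vertex Cover, NP-complete by Karp) and, for each pair $\{u_i,v_i\}$, take a Hamiltonian path on $V=S$ with those two vertices as endpoints; since a set is a strong metric generator for a tree precisely when it misses at most one leaf, simultaneity collapses to hitting every endpoint pair, giving $\Sd_s(\mathcal{G})\le p$ directly. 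Your gadget is simpler (no auxiliary vertices, no offset by one) and in fact yields the formally stronger conclusion that SSD is NP-complete even for families of \emph{paths}; the paper's extra structure is what one needs when starting from the version of HSP that allows singleton constraints. Two minor remarks, neither a gap: the criterion you invoke is the set-level form of Theorem~\ref{lem_oellerman} (a set is a strong metric generator if and only if it is a vertex cover of the strong resolving graph), which the paper also uses implicitly and which for trees is exactly the ``all leaves but at most one'' test; and when the Vertex Cover threshold satisfies $p\ge |V|-1$ both instances are trivially positive, so the produced SSD instance should cap $p$ at $|V|-1$ to respect the stated range $1\le p\le |V|-1$.
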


\begin{proof}
As we discussed previously, determining whether a vertex set $S \subset V$, $|S| \le p$, is a simultaneous strong metric generator for a graph family ${\cal G}$ can be done in polynomial time, so SSD is in $NP$.

It is known that HSP is $NP$-complete even if $|C_i|\leq 2$ for every $C_i \in {\cal C}$ \cite{Garey1979}. We will refer to this subcase of HSP as HSP2, and will show a polynomial time transformation of HSP2 into SSD. Let $S=\{v_1,v_2,\ldots,v_n\}$ be a finite set and let ${\cal C}=\{C_1,C_2,\ldots,C_k\}$, where every $C_i \in {\cal C}$ satisfies $1 \leq |C_i| \leq 2$ and $C_i \subseteq S$. Let $p$ be a positive integer such that $p\leq |S|$, and let $S'=\{w_1,w_2,\ldots,w_n\}$ such that $S \cap S'=\emptyset$. We construct the family ${\cal T}=\{T_1,T_2,\ldots,T_k\}$ composed by trees on the common vertex set $V=S \cup S' \cup \{u\}$, $u \notin S \cup S'$, as follows. For every $r \in \{1,\ldots,k\}$, if $C_r=\{v_{i_r}\}$, let $P_r$ be a path on the vertices of $(S-\{v_{i_r}\}) \cup (S'-\{w_{i_r}\})$, and let $T_r$ be the tree obtained from $P_r$ by joining by an edge the vertex $u$ to one end of $P_r$, and joining the other end of $P_r$ to the vertices $v_{i_r}$ and $w_{i_r}$. On the other hand, if $C_r=\{v_{i_r},v_{j_r}\}$, $P_r$ is a path on the vertices of $(S-\{v_{i_r},v_{j_r}\}) \cup S'$, and $T_r$ is the tree obtained from $P_r$ by joining by an edge the vertex $u$ to one end of $P_r$, and the other end of $P_r$ to the vertices $v_{i_r}$ and $v_{j_r}$. Figure~\ref{figTransformHSP2_SSMD} shows an example of this construction. We have that there exists a subset $S''$ of cardinality $|S''| \leq p$ that contains at least one element from each $C_i \in {\cal C}$ if and only if $\Sd_s({\cal T}) \leq p+1$. It is simple to verify that this transformation of HSP2 into SSD can be done in polynomial time.
\end{proof}

\begin{figure}[h]
\begin{center}

\begin{tikzpicture}
[inner sep=0.7mm, place/.style={circle,draw=black,
fill=white,thick},xx/.style={circle,draw=black!99,fill=black!99,thick},
transition/.style={rectangle,draw=black!50,fill=black!20,thick},line width=1pt,scale=0.5]
\coordinate (A1) at (0,1);
\coordinate (B1) at (1,1);
\coordinate (C1) at (2,1);
\coordinate (D1) at (3,1);
\coordinate (E1) at (4,1);
\coordinate (F1) at (5,1);
\coordinate (G1) at (6.5,1);
\coordinate (I1) at (7.5,0);
\coordinate (J1) at (7.5,2);

\coordinate (A2) at (9.5,1);
\coordinate (B2) at (10.5,1);
\coordinate (C2) at (11.5,1);
\coordinate (D2) at (12.5,1);
\coordinate (E2) at (13.5,1);
\coordinate (F2) at (14.5,1);
\coordinate (G2) at (16,1);
\coordinate (I2) at (17,0);
\coordinate (J2) at (17,2);

\coordinate (A3) at (19,1);
\coordinate (B3) at (20,1);
\coordinate (C3) at (21,1);
\coordinate (D3) at (22,1);
\coordinate (E3) at (23,1);
\coordinate (F3) at (24,1);
\coordinate (G3) at (25.5,1);
\coordinate (I3) at (26.5,0);
\coordinate (J3) at (26.5,2);

\draw[black] (A1) -- (B1) -- (C1) -- (D1) -- (E1) -- (F1) -- (G1);
\draw[black] (I1) -- (G1) -- (J1);

\draw[black] (A2) -- (B2) -- (C2) -- (D2) -- (E2) -- (F2) -- (G2);
\draw[black] (I2) -- (G2) -- (J2);

\draw[black] (A3) -- (B3) -- (C3) -- (D3) -- (E3) -- (F3) -- (G3);
\draw[black] (I3) -- (G3) -- (J3);

\node at (A1) [place]  {};
\coordinate [label=center:{$u$}] (u11) at (0,1.7);
\node at (B1) [place]  {};
\coordinate [label=center:{$v_3$}] (v31) at (1,0.3);
\node at (C1) [place]  {};
\coordinate [label=center:{$v_4$}] (v41) at (2,1.7);
\node at (D1) [place]  {};
\coordinate [label=center:{$w_1$}] (w11) at (3,0.3);
\node at (E1) [place]  {};
\coordinate [label=center:{$w_2$}] (w21) at (4,1.7);
\node at (F1) [place]  {};
\coordinate [label=center:{$w_3$}] (w31) at (5,0.3);
\node at (G1) [place]  {};
\coordinate [label=center:{$w_4$}] (w41) at (6,1.5);
\node at (I1) [place]  {};
\coordinate [label=center:{$v_2$}] (v21) at (7,-0.5);
\node at (J1) [place]  {};
\coordinate [label=center:{$v_1$}] (v11) at (7,2.5);

\node at (A2) [place]  {};
\coordinate [label=center:{$u$}] (u12) at (9.5,1.7);
\node at (B2) [place]  {};
\coordinate [label=center:{$v_1$}] (v12) at (10.5,0.3);
\node at (C2) [place]  {};
\coordinate [label=center:{$v_2$}] (v22) at (11.5,1.7);
\node at (D2) [place]  {};
\coordinate [label=center:{$v_4$}] (v42) at (12.5,0.3);
\node at (E2) [place]  {};
\coordinate [label=center:{$w_1$}] (w12) at (13.5,1.7);
\node at (F2) [place]  {};
\coordinate [label=center:{$w_2$}] (w22) at (14.5,0.3);
\node at (G2) [place]  {};
\coordinate [label=center:{$w_4$}] (w42) at (15.5,1.5);
\node at (I2) [place]  {};
\coordinate [label=center:{$w_3$}] (w32) at (16.5,-0.5);
\node at (J2) [place]  {};
\coordinate [label=center:{$v_3$}] (v32) at (16.5,2.5);

\node at (A3) [place]  {};
\coordinate [label=center:{$u$}] (u13) at (19,1.7);
\node at (B3) [place]  {};
\coordinate [label=center:{$v_1$}] (v13) at (20,0.3);
\node at (C3) [place]  {};
\coordinate [label=center:{$v_3$}] (v33) at (21,1.7);
\node at (D3) [place]  {};
\coordinate [label=center:{$w_1$}] (w13) at (22,0.3);
\node at (E3) [place]  {};
\coordinate [label=center:{$w_2$}] (w23) at (23,1.7);
\node at (F3) [place]  {};
\coordinate [label=center:{$w_3$}] (w33) at (24,0.3);
\node at (G3) [place]  {};
\coordinate [label=center:{$w_4$}] (w43) at (25,1.5);
\node at (I3) [place]  {};
\coordinate [label=center:{$v_4$}] (v43) at (26,-0.5);
\node at (J3) [place]  {};
\coordinate [label=center:{$v_2$}] (v23) at (26,2.5);

\coordinate [label=center:{$T_1$}] (T1) at (4,-1);
\coordinate [label=center:{$T_2$}] (T2) at (13.5,-1);
\coordinate [label=center:{$T_3$}] (T3) at (23,-1);

\end{tikzpicture}

\caption{The family ${\cal T}=\{T_1,T_2,T_3\}$ is constructed for transforming an instance of HSP2, where $S=\{v_1,v_2,v_3,v_4\}$ and $C =\{\{v_1,v_2\},\{v_3\},\{v_2,v_4\}\}$, into an instance of SSD.}
\label{figTransformHSP2_SSMD}
\end{center}
\end{figure}
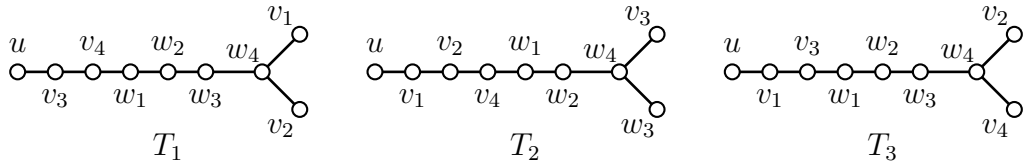

\end{document}